\newcommand{\authorfootnotes}{\renewcommand\thefootnote{\@fnsymbol\c@footnote}}%
\newcommand{\cp}{\,\square\,}
\newcommand{\vertex}{\node[vertex]}
\tikzstyle{vertex}=[circle, draw, inner sep=0pt, minimum size=6pt]
\newtheorem{thm}{Theorem}[section]
\newtheorem{lem}[thm]{Lemma}
\newtheorem{cor}[thm]{Corollary}
\newtheorem{prop}[thm]{Proposition}
\newtheorem{prob}{Problem}
\newcommand{\D}{{\rm DOM}}
\begin{document}

\title{Orientable domination in product-like graphs}

\author{
Sarah Anderson$^{a}$
\and
Bo\v{s}tjan Bre\v{s}ar$^{b,c}$
\and
Sandi Klav\v{z}ar$^{d,b,c}$
\and
Kirsti Kuenzel$^{e}$
\and
Douglas F. Rall$^{f}$\\
}

\date{\today}

\maketitle

\begin{center}
$^a$ Department of Mathematics, University of St. Thomas, St. Paul, MN, USA\\
$^b$ Faculty of Natural Sciences and Mathematics, University of Maribor, Slovenia\\
$^c$ Institute of Mathematics, Physics and Mechanics, Ljubljana, Slovenia\\
$^d$ Faculty of Mathematics and Physics, University of Ljubljana, Slovenia\\
$^e$ Department of Mathematics, Trinity College, Hartford, CT, USA\\
$^f$ Department of Mathematics, Furman University, Greenville, SC, USA\\
\end{center}

\begin{abstract}
The orientable domination number, ${\rm DOM}(G)$, of a graph $G$ is the largest domination number over all orientations of $G$. In this paper, ${\rm DOM}$ is studied on different product graphs and related graph operations. The orientable domination number of arbitrary corona products is determined, while sharp lower and upper bounds are proved for Cartesian and lexicographic products.  A result of Chartrand et al.\ from 1996 is extended by
establishing the values of ${\rm DOM}(K_{n_1,n_2,n_3})$ for arbitrary positive integers $n_1,n_2$ and $n_3$. While considering the orientable domination number 
of lexicographic product graphs, we answer in the negative a question concerning domination and packing numbers in acyclic digraphs posed in [Domination in digraphs and their direct and Cartesian products, J.\ Graph Theory 99 (2022) 359--377].
\end{abstract}

\noindent
{\bf Keywords:} digraph, domination, orientable domination number, packing, graph product, corona graph \\

\noindent
{\bf AMS Subj.\ Class.\ (2020)}: 05C20, 05C69, 05C76.

\maketitle

\section{Introduction}

Domination is one of the most explored topics in graph theory. On the other hand, this concept has not received as much attention in directed graphs. There are several ways in which domination in graphs transfers to directed graphs, notably in-domination, out-domination, twin domination, and reverse domination. The most standard one, however, is out-domination, which is then referred to simply as domination. For some recent papers on domination in digraphs see papers~\cite{dong-2015,hao-2018,hlnt-2018} and a recent survey~\cite{bookHHH}.

Chartrand, VanderJagt and Yue defined and studied two invariants on undirected graphs that are based on the domination number of the orientations of the graph~\cite{cvy-1996}.
The concept presented in~\cite{cvy-1996}, which is the main topic of this paper, is defined as follows. Given an undirected graph $G$, the {\em orientable domination number} of $G$ is $$\D(G)=\max\{\gamma(D):\, D\textrm{ is an orientation of }G\}.$$
Replacing $\max$ with $\min$ in the above definition gives the (ordinary) domination number $\gamma(G)$ of $G$. The first result on orientable domination number, although stated in a different language, was proved by Erd\H{o}s~\cite{erdos} who found the following bounds for $\D(K_n)$, where $n\ge 2$: $\log_2 n-2\log_2(\log_2 n)\le \D(K_n)\le \log_2(n+1).$
Interestingly, the exact values of the orientable domination number of complete graphs are still not known. Szekeres and Szekeres~\cite{szekeres-1965}
improved the upper bound of Erd\H{o}s to arrive at the following result:
\begin{equation}
\label{eq:log}
\log_2 n-2\log_2(\log_2 n)\le \D(K_n)\le \log_2 n-\log_2(\log_2 n)+2.
\end{equation}
Lu, Wang and Wong~\cite{lu-2000} gave a short proof of the above upper bound.  Dominating sets
in tournaments were studied in~\cite{reid-2004} and used in~\cite{alon-2006}.

One of the main open problems in domination in graphs is Vizing's conjecture from~\cite{v1968}, which considers domination in the Cartesian product of graphs (see the survey paper~\cite{BDGHHKR2012} and the references therein).  A number of papers are devoted not only to the conjecture itself, but also to several other domination invariants in various graph products. With this paper we initiate the study of orientable domination in graph products and with respect to related graph operations.

In the next section, we establish notation and mention some preliminary results needed throughout the paper. In Section~\ref{sec:cor}, we determine the orientable domination number of the corona $G\odot H$ of arbitrary graphs $G$ and $H$, which is expressed as a function of the orientable domination numbers of $G$ and $H$. In Section~\ref{sec:cart}, we prove a sharp lower and upper bound on the orientable domination number of the Cartesian product of two graphs $G$ and $H$. We also pose the Vizing-like problem, whether $\D(G\cp H)\ge \D(G)\D(H)$ holds for all graphs $G$ and $H$, and observe that it holds if at least one of $G$ or $H$ is bipartite.  In Section~\ref{sec:lex}, we prove a sharp lower and  upper bound on $\D(G\circ H)$, where $G\circ H$ is the lexicographic product of graphs $G$ and $H$.
We continue with generalized lexicographic products, where the main focus is given to complete multipartite graphs. We establish the values of $\D(K_{n_1,n_2,n_3})$ for arbitrary positive integers $n_1,n_2$ and $n_3$, by which we extend a result of Chartrand et al.~\cite{cvy-1996}.
We also study the orientable domination number in specific classes of lexicographic product graphs.  In particular, a specific orientation of the graph $C_{2k+1}\circ \overline{K_s}$ allows us to provide a negative answer to the problem from~\cite{bkr} asking whether the domination number of an acyclic digraph is equal to its packing number.

\section{Notation and preliminaries}

Let $D=(V(D),A(D))$ be a digraph.
If $(u,v) \in A(D)$, then we say that $u$ \emph{dominates} $v$ or that $v$ \emph{is dominated} by $u$. A set $S\subseteq V(D)$ is a {\em dominating set} of $D$ if each vertex in $V(D) \setminus S$ is dominated by a vertex in $S$. The {\em domination number}, $\gamma(D)$, of $D$ is the smallest cardinality of a dominating set of $D$. A dominating set of $D$ of cardinality $\gamma(D)$ is a {\em $\gamma$-set} of $D$ or simply a {\em $\gamma(D)$-set}. An undirected graph $G$ can be considered as a digraph in which $A(G)$ is a symmetric binary relation on $V(G)$. The order of a (di)graph $G$ will be denoted by $n(G)$.

A vertex $u$ is an \emph{in-neighbor} of $v$ if $(u,v) \in A(D)$ and an \emph{out-neighbor} of $v$ if $(v,u) \in A(D)$. The \emph{open out-neighborhood} of $v$ is the set of out-neighbors of $v$ and is denoted by $N^+_D(v)$. The \emph{closed out-neighborhood} of $v$ is the set $N^+_D[v]$ defined by $N^+_D[v]=N^+_D(v)\cup \{v\}$.  In a similar manner one defines the \emph{open in-neighborhood} $N^-_D(v)$ of $v$ and the \emph{closed in-neighborhood} $N^-_D[v]$ of $v$. The \emph{in-degree} of $v$ is the number $|N^-_D(v)|$ and the \emph{out-degree} of $v$ is $|N^+_D(v)|$.
If the digraph $D$ is clear from the context, then we may omit the subscript $D$ of the above notations.

Let $G$ be an undirected graph. An {\em orientation of $G$} is a digraph in which every edge from $G$ is directed in one of the two possible directions. Formally, an orientation of $G$ is defined by a mapping $f:E(G)\to V(G)\times V(G)$, such that if $uv\in E(G)$, then $f(uv)\in \{(u,v),(v,u)\}$. We denote this orientation of $G$ by $G_f$, while we refer to $f$ as the {\em orienting mapping}. Note that by this definition, we can formulate $\D(G)$ as $\max\{\gamma(G_f):\, f\textrm{ is an orientating mapping of }G\}$.

The following observations are from~\cite[Observations 3 and 4]{ch-2012}.

\begin{lem}
\label{lem:induced-spanning} Let $G$ and $H$ be two graphs.
\begin{itemize}
\item[(i)] If $H$ is an induced subgraph of $G$, then $\D(G)\ge \D(H)$.
\item[(ii)] If $H$ is an spanning subgraph of $G$, then $\D(G)\le \D(H)$.
\end{itemize}
\end{lem}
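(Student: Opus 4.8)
The plan is to treat the two items separately, in each case passing between an orientation of one graph and an orientation of the other.

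Item (ii) is the easier direction, so I would do it first. Start with an arbitrary orienting mapping $f$ of $G$ and restrict it to $E(H)$, obtaining an orientation $H_{f'}$ of $H$ with $f'=f|_{E(H)}$. Since $H$ is spanning we have $V(H)=V(G)$ and $A(H_{f'})\subseteq A(G_f)$, so adding back the missing arcs cannot destroy domination: every dominating set of $H_{f'}$ is also a dominating set of $G_f$, whence $\gamma(G_f)\le \gamma(H_{f'})\le \D(H)$. Taking the maximum over all orienting mappings $f$ of $G$ gives $\D(G)\le \D(H)$.

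For item (i) I would argue in the opposite direction: starting from an orientation $H_f$ of $H$ with $\gamma(H_f)=\D(H)$, extend $f$ to an orienting mapping $g$ of $G$ chosen so that the extra vertices and edges cannot help dominate $V(H)$. Concretely, set $W=V(G)\setminus V(H)$, keep the orientation $f$ on the edges of $H$ (possible because $H$ is \emph{induced}, so $G[V(H)]=H$ and no edge is missing or added), orient every edge of $G$ joining $W$ and $V(H)$ from its endpoint in $V(H)$ towards its endpoint in $W$, and orient the remaining edges (those with both endpoints in $W$) arbitrarily. With this choice, the in-neighbors of any $v\in V(H)$ in $G_g$ are exactly its in-neighbors in $H_f$. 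Now let $S$ be any dominating set of $G_g$ and put $S'=S\cap V(H)$. If $v\in V(H)\setminus S'$ then $v\notin S$, so some $u\in S$ dominates $v$ in $G_g$; by the previous sentence $u\in N^-_{H_f}(v)\subseteq V(H)$, hence $u\in S'$ and $u$ dominates $v$ in $H_f$. Thus $S'$ is a dominating set of $H_f$, so $|S|\ge |S'|\ge \gamma(H_f)=\D(H)$; as $S$ was arbitrary, $\gamma(G_g)\ge \D(H)$ and therefore $\D(G)\ge \gamma(G_g)\ge \D(H)$.

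The only point that genuinely requires care is the choice of orientation in (i): one must ensure that the newly oriented edges leave every vertex of $V(H)$ with no new in-neighbor, which is exactly what forces any dominating set of $G_g$ to contain a dominating set of $H_f$. Everything else is bookkeeping, and the hypothesis that $H$ is \emph{induced} (rather than an arbitrary subgraph) is used precisely to guarantee $G[V(H)]=H$, so that the optimal orientation $f$ of $H$ extends to $G$ without conflict.
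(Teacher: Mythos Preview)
Your proof is correct. Note, however, that the paper does not actually prove this lemma: it is quoted verbatim as Observations~3 and~4 from Caro and Henning~\cite{ch-2012}, with no argument supplied. What you have written is the standard justification one would expect for these observations, and both parts are handled cleanly---in (ii) restricting the orientation, and in (i) extending it so that all edges between $V(H)$ and $V(G)\setminus V(H)$ point outward from $V(H)$, ensuring no new in-neighbors are created for vertices of $H$. Your remark that the induced hypothesis is used precisely to guarantee $G[V(H)]=H$ is exactly the point.
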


We also recall the following result from~\cite[Lemma 3]{ch-2012}.
\begin{lem}\label{lem:partition}
Let $G$ be a graph, and let $V_1,\ldots,V_k$ be subsets of $V(G)$ such that $V(G)=V_1\cup\cdots\cup V_k$. Letting $G_i$ be the subgraph of $G$ induced by $V_i$ for all $i\in [k]$, we get $$\D(G)\le\sum_{j=1}^{k}{\D(G_i)}.$$
\end{lem}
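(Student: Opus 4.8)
The plan is to build a single orientation of $G$ that performs badly (from the adversary's point of view) by pasting together worst-case orientations on the pieces $G_i$ and then orienting all remaining edges—the ones joining distinct parts $V_i$ and $V_j$—in a way that can only help a dominator. Concretely, I would fix an arbitrary linear order $V_1, V_2, \ldots, V_k$ of the parts, and for each $i$ choose an orientation $f_i$ of $G_i$ with $\gamma((G_i)_{f_i}) = \D(G_i)$, which exists by the definition of $\D$. Let $f$ be the orienting mapping of $G$ that agrees with $f_i$ on every edge lying inside $V_i$, and that orients every edge with one endpoint in $V_i$ and the other in $V_j$, $i<j$, from the $V_j$-endpoint toward the $V_i$-endpoint (i.e.\ "higher index dominates lower index"). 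By the definition of $\D$ as a maximum over all orientations, it suffices to prove $\gamma(G_f) \le \sum_{i=1}^k \D(G_i)$.

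To establish that bound I would exhibit an explicit dominating set of $G_f$ of the required size. Here the key observation is that the cross edges, oriented from higher part to lower part, mean that a vertex in $V_j$ has out-neighbors only inside $V_j$ or in parts $V_i$ with $i<j$; equivalently, a vertex in $V_i$ can only be dominated (in $G_f$) by vertices of $V_i$ or by vertices of parts $V_j$ with $j>i$. So I process the parts from the last one backwards. For the final part $V_k$, let $S_k$ be a $\gamma$-set of the digraph $(G_k)_{f_k}$; since the only in-neighbors of a $V_k$-vertex lie in $V_k$, the set $S_k$ must dominate all of $V_k$ within $G_f$ as well, and $|S_k| = \D(G_k)$. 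Having chosen $S_k, S_{k-1}, \ldots, S_{i+1}$, pick $S_i$ to be a $\gamma$-set of $(G_i)_{f_i}$, of size $\D(G_i)$. Then $S = S_1 \cup \cdots \cup S_k$ has cardinality at most $\sum_{i=1}^k \D(G_i)$, and I claim it dominates $G_f$: any vertex $v \in V(G) \setminus S$ lies in some $V_i$ and is not in $S_i$, so it has an in-neighbor in $S_i \subseteq S$ coming from the orientation $f_i$ of $G_i$. This verifies that $S$ is a dominating set of $G_f$, hence $\gamma(G_f) \le |S| \le \sum_{i=1}^k \D(G_i)$, which is what we wanted.

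I should remark that the fragments $V_i$ need not be disjoint in the statement, so a vertex in an overlap $V_i \cap V_j$ is counted once in $V(G)$ but the construction above implicitly needs to assign it to exactly one part; to be safe I would first replace the cover by a genuine partition $V_i' \subseteq V_i$ (e.g.\ $V_i' = V_i \setminus (V_1 \cup \cdots \cup V_{i-1})$), note that $G[V_i']$ is an induced subgraph of $G[V_i]$, so by Lemma~\ref{lem:induced-spanning}(i) we have $\D(G[V_i']) \le \D(G_i)$, and run the argument on the $V_i'$. The main (and really only) subtlety is exactly this bookkeeping together with the direction convention on cross edges: one must orient the inter-part edges consistently so that the "earlier" parts can be dominated from "later" parts, and then the dominating set is assembled part by part. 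Everything else is a direct unwinding of the definitions, so I expect no genuine obstacle beyond getting that orientation convention and the induction order straight.
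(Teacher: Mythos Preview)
Your argument has the quantifiers reversed. Since $\D(G)=\max_f \gamma(G_f)$, proving $\D(G)\le M$ requires showing $\gamma(G_f)\le M$ for \emph{every} orientation $f$ of $G$, not for one particular $f$ of your choosing. Exhibiting a single orientation with small domination number gives no upper bound on the maximum (compare: to prove $\max_x g(x)\le M$ you cannot just display one $x_0$ with $g(x_0)\le M$). Your sentence ``By the definition of $\D$ as a maximum over all orientations, it suffices to prove $\gamma(G_f)\le\sum_{i=1}^k\D(G_i)$'' is therefore the point where the argument breaks.

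The fix is short. Let $f$ be an \emph{arbitrary} orientation of $G$. Its restriction to the edges of $G_i$ is an orientation of $G_i$, so by the definition of $\D(G_i)$ there is a dominating set $S_i$ of that restricted digraph with $|S_i|\le\D(G_i)$. Every vertex of $G$ lies in some $V_i$ and is dominated by $S_i$ already within that induced subdigraph, so $S=S_1\cup\cdots\cup S_k$ dominates $G_f$ and $\gamma(G_f)\le|S|\le\sum_i\D(G_i)$. As $f$ was arbitrary, $\D(G)\le\sum_i\D(G_i)$. Notice that your careful choice of cross-edge directions and your detour through a refined partition are both unnecessary: the cross-edges play no role, and overlaps can only make $|S|$ smaller. (For the record, the paper itself does not prove this lemma; it is quoted from Caro and Henning~\cite{ch-2012}.)
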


Let $G$ be a graph.  We denote the {\em independence number}, {\em matching number}, {\em vertex cover number}, and {\em edge cover number} of $G$ by
$\alpha(G)$, $\alpha'(G)$, $\beta(G)$, and $\beta'(G)$, respectively.
It is well-known that $\alpha(G)+\beta(G)=n(G)$ and $\alpha'(G)+\beta'(G)=n(G)$ in any graph $G$. If $G$ is bipartite, then $\alpha'(G)=\beta(G)$ by the famous 
K\H{o}nig-Egerv\'{a}ry theorem.

We will use yet another result of Caro and Henning~\cite[Theorem 2(a) and Theorem 4(a)]{ch-2012}.

\begin{thm}\label{thm:bounds}
 If $G$ is a graph, then
\begin{itemize}
\item[(i)] $\D(G)\ge \alpha(G)$, and  equality holds if $G$ is bipartite.
\item[(ii)] $\D(G)\le n(G)-\alpha'(G)$.
\end{itemize}
\end{thm}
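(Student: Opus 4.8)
The plan is to prove the two inequalities directly by exhibiting, respectively, a good orientation for the lower bound and a good dominating set for the upper bound, and then to deduce the claimed equality for bipartite graphs from the K\H{o}nig--Egerv\'{a}ry theorem rather than arguing it separately.

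For part (i), I would start from a maximum independent set $I$ of $G$, so $|I|=\alpha(G)$. Since $I$ is independent, every edge incident with a vertex of $I$ joins $I$ to $V(G)\setminus I$; orient all such edges away from $I$, and orient the edges inside $V(G)\setminus I$ arbitrarily, obtaining an orientation $G_f$. In $G_f$ every vertex of $I$ has empty in-neighborhood, hence cannot be dominated by any other vertex, so $I$ is contained in every dominating set of $G_f$; consequently $\gamma(G_f)\ge |I|=\alpha(G)$ and therefore $\D(G)\ge\alpha(G)$. (Because a maximum independent set is maximal, $I$ is in fact itself a dominating set of $G_f$, so $\gamma(G_f)=\alpha(G)$, but only the inequality is needed.)

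For part (ii), fix an arbitrary orientation $G_f$ and a maximum matching $M=\{u_1v_1,\ldots,u_kv_k\}$ of $G$, where $k=\alpha'(G)$, relabeling the endpoints of each matching edge so that $(u_i,v_i)\in A(G_f)$. Set $S=V(G)\setminus\{v_1,\ldots,v_k\}$. Since the edges of $M$ are pairwise disjoint, the $v_i$ are distinct and each $u_i$ lies in $S$, so $|S|=n(G)-k=n(G)-\alpha'(G)$, and every vertex outside $S$, namely each $v_i$, is dominated by $u_i\in S$. Hence $S$ is a dominating set of $G_f$ and $\gamma(G_f)\le n(G)-\alpha'(G)$; as $G_f$ was arbitrary, $\D(G)\le n(G)-\alpha'(G)$. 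Finally, if $G$ is bipartite then $\alpha'(G)=\beta(G)$ by K\H{o}nig--Egerv\'{a}ry, and together with $\alpha(G)+\beta(G)=n(G)$ this gives $n(G)-\alpha'(G)=\alpha(G)$; part (ii) then yields $\D(G)\le\alpha(G)$, which combined with part (i) forces equality.

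I do not anticipate a genuine obstacle: both directions are one-line constructions. The only points requiring care are to verify in part (i) that independence of $I$ is exactly what makes the vertices of $I$ have no in-neighbor in $G_f$ (and that maximality of $I$ is what gives the matching upper bound $\gamma(G_f)=\alpha(G)$ if one wants it), and in part (ii) that disjointness of the matching edges is what guarantees $u_i\in S$ and the $v_i$ are distinct, so that $|S|$ has the stated size.
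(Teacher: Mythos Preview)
Your proof is correct. Note, however, that the paper does not actually prove this theorem: it is quoted as a result of Caro and Henning~\cite{ch-2012} (their Theorem~2(a) and Theorem~4(a)) and invoked without proof. Your argument is the natural one and is essentially the same as the original: orient all edges away from a maximum independent set for~(i), and for~(ii) delete from $V(G)$ the head of each edge in a maximum matching to obtain a dominating set in any orientation. Your derivation of the bipartite equality from~(i), (ii) and K\H{o}nig--Egerv\'{a}ry is also standard and correct.
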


\section{Corona graphs}
\label{sec:cor}

Let $G$ and $H$ be two graphs. The {\em corona}, $G\odot H$, of $G$ and $H$ is the graph obtained from the disjoint union of $G$ and $n(G)$ copies of $H$, which we denote by $H_u$ for every $u\in V(G)$, and then joining each $u\in V(G)$ to all vertices of $H_u$.  The {\em join} of $G$ and $H$ is the graph $G+H$ obtained from the disjoint union of $G$ and $H$ by connecting each vertex of $G$ with each vertex of $H$.

We start with a simple observation about the join with $K_1$.

\begin{lem}\label{lem:join-universal}
If $G$ is a graph, then $\D(G+K_1)\in\{\D(G),\D(G)+1\}$.
\end{lem}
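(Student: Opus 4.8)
The plan is to prove the two inequalities $\D(G)\le \D(G+K_1)\le \D(G)+1$ separately, each as a one-line consequence of a preliminary result already recorded above. Write $v$ for the vertex of the $K_1$ factor, so that $v$ is a universal vertex of $G+K_1$.

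For the lower bound, note that deleting $v$ shows $G$ is an induced subgraph of $G+K_1$, so Lemma~\ref{lem:induced-spanning}(i) gives $\D(G+K_1)\ge \D(G)$ at once. For the upper bound I would apply Lemma~\ref{lem:partition} to the partition $V(G+K_1)=V(G)\cup\{v\}$: the subgraph induced by $V(G)$ is $G$, while the subgraph induced by $\{v\}$ is $K_1$, whose unique orientation has domination number $1$, so $\D(K_1)=1$. Lemma~\ref{lem:partition} then yields $\D(G+K_1)\le \D(G)+\D(K_1)=\D(G)+1$. Combining the two bounds gives $\D(G+K_1)\in\{\D(G),\D(G)+1\}$. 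For readers who prefer a self-contained argument for the upper bound, one can instead reason directly: given any orientation $f$ of $G+K_1$, its restriction to $V(G)$ is an orientation of $G$, hence has a dominating set $S$ with $|S|\le\D(G)$; then $S\cup\{v\}$ is a dominating set of $(G+K_1)_f$, since every vertex of $V(G)\setminus S$ is dominated already within $G$ and $v\in S\cup\{v\}$. Taking the maximum over all orienting mappings $f$ gives $\D(G+K_1)\le\D(G)+1$.

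The main obstacle here is essentially nonexistent: this is a warm-up observation, and the only thing to notice is which two of the recorded preliminary lemmas to invoke, together with the trivial evaluation $\D(K_1)=1$. (It is worth keeping in mind that both values in $\{\D(G),\D(G)+1\}$ genuinely occur — e.g., $\D(\overline{K_2}+K_1)=\D(P_3)=2=\D(\overline{K_2})$ on one hand, and $\D(K_2+K_1)=\D(K_3)=2=\D(K_2)+1$ on the other — so no sharper statement is available and the proof need not decide which case holds.)
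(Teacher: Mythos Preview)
Your proof is correct and essentially matches the paper's: both obtain the lower bound from Lemma~\ref{lem:induced-spanning}(i), and for the upper bound the paper uses exactly the direct argument you give as your ``self-contained'' alternative (restrict the orientation to $G$, take a dominating set $S$ of size at most $\D(G)$, and add the universal vertex). Your primary route via Lemma~\ref{lem:partition} is an equally valid one-line substitute, so there is no meaningful difference in approach.
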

\begin{proof}
By Lemma~\ref{lem:induced-spanning}(i), $\D(G+K_1)\ge \D(G)$, since $G$ is an induced subgraph of $G+K_1$. Let $V(K_1)=\{u\}$, and consider an arbitrary orientation $(G+K_1)_f$. The restriction of this orientation to $G$ can be dominated by a set $S$ of at most $\D(G)$ vertices. Then $S\cup\{u\}$ dominates $(G+K_1)_f$. We conclude that $\D(G+K_1)\le \D(G)+1$.
\end{proof}

Clearly, $\D(K_2)=1$ and $\D(K_3)=2$. By~\eqref{eq:log} and Lemma~\ref{lem:induced-spanning}(i), for every $k\ge 2$ there exists $n_k\in\mathbb{N}$ such that $\D(K_{n_k})=k$ and $\D(K_{n_k+1})=k+1$.  Hence, the sequence of complete graphs contains an infinite subsequence for which the larger value in the conclusion of Lemma~\ref{lem:join-universal} is attained. (Since the bounds in~\eqref{eq:log} increase slowly, indices $n_k$ with $\D(K_{n_k})=\D(K_{n_k+1})$ appear more often.) More generally, let $G$ be an arbitrary graph. Combining Lemma~\ref{lem:induced-spanning}(i) and Eq.~\eqref{eq:log}, we infer that the sequence of graphs $(G+K_n)_{n\ge 1}$ contains a subsequence of graphs which attain the larger value in the conclusion of Lemma~\ref{lem:join-universal}.

The condition in the conclusion of Lemma~\ref{lem:join-universal} is crucial for determining the orientable domination number of corona graphs, as shown in Theorem~\ref{thm:corona} below.
Another, more explicit example of a sequence of graphs that enjoy the studied condition,  is obtained by the class of even paths.

Let $n$ be any even positive integer, and let $G_n=P_n+K_1$. We also let $V(G_n)=\{x,x_1,\ldots,x_n\}$, where $x$ is the universal vertex of $G_n$, and vertices $x_1,\ldots,x_n$ of the path of order $n$ are indexed in the natural order. Consider the orienting mapping $f$ of $G_n$ defined as follows: $f(x_ix_{i+1})=(x_i,x_{i+1})$ for $i\in [n-1]$ and
\begin{displaymath}
f(xx_j) = \left\{ \begin{array}{ll}
(x,x_j); & j\textrm{ odd},\\
(x_j,x); & j\textrm{ even}.
\end{array} \right.
\end{displaymath}
Note that $\D(P_n)=n/2$, and we claim that $\D(G_n)=n/2+1$. By Lemma~\ref{lem:partition}, $\D(G_n)\le \D(P_n)+\D(K_1)=n/2+1$. Let $D$ be a minimum dominating set of $(G_n)_f$. Since $V((G_n)_f) \setminus N^+[x]=\{x_i:\, i\textrm{ is even}\}$, and no two vertices of this set are dominated in $(G_n)_f$ by a single vertex, we infer that $x\in D$ implies $|D|=n/2+1$. Now suppose that $x\notin D$ and that $|D| \le n/2$.  Since $D$ is a dominating set of $(P_n)_f$ and $\gamma((P_n)_f)=n/2$, it follows that 
$|D|=n/2$.  On the other hand, $x_1 \in D$ since $x_1$ has no in-neighbor in $(P_n)_f$.  Moreover, $|D \cap \{x_{2i-1},x_{2i}\}|=1$ for each $i \in [n/2]$
since $D$ dominates $(P_n)_f$ and has cardinality $n/2$.  Since $x_1 \in D$, it follows immediately that $D=\{x_{2i-1} :\, i \in [n/2]\}$.  This is a contradiction
since $D$ does not dominate $x$.  Hence, $|D| \ge n/2+1$.

We conclude that $\D(P_n)=n/2$ and $\D(P_n+K_1)=n/2+1$.

\begin{thm}\label{thm:corona}
If $G$ and $H$ are two graphs, then
\begin{displaymath}
\D(G\odot H) = \left\{ \begin{array}{ll}
\D(H)n(G); & \D(H+ K_1)=\D(H),\\
\D(H)n(G)+\D(G); & \D(H+ K_1)=\D(H)+1.
\end{array} \right.
\end{displaymath}
\end{thm}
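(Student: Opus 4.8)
The plan is to prove the two inequalities $\D(G\odot H)\le$ and $\ge$ the claimed value separately, treating the two cases in the piecewise formula in parallel. For the upper bound I would use Lemma~\ref{lem:partition} with the partition of $V(G\odot H)$ into $V(G)$ together with the $n(G)$ vertex sets $V(H_u)$, $u\in V(G)$. Since the subgraph induced by $V(G)$ is $G$ and each subgraph induced by $V(H_u)$ is $H$, this immediately gives $\D(G\odot H)\le \D(G)+\D(H)n(G)$, which matches the claimed value in the second case. For the first case, where $\D(H+K_1)=\D(H)$, one needs the sharper bound $\D(G\odot H)\le \D(H)n(G)$: given any orientation of $G\odot H$, restrict it to each $H_u$ together with the vertex $u$ — this is an orientation of a spanning subgraph of $H_u+K_1$ (namely $H+K_1$ with some edges possibly missing, but $u$ is adjacent to all of $V(H_u)$ so in fact it is an orientation of exactly $H+K_1$), hence by Lemma~\ref{lem:induced-spanning}(ii) applied appropriately — actually more directly, the induced sub-digraph on $V(H_u)\cup\{u\}$ is an orientation of $H+K_1$, so it has a dominating set $D_u$ with $|D_u|\le \D(H+K_1)=\D(H)$; then $\bigcup_{u\in V(G)} D_u$ dominates every vertex of $G\odot H$ (each $H_u$ is dominated from within $V(H_u)\cup\{u\}$, and each $u\in V(G)$ lies in $V(H_u)\cup\{u\}$ hence is dominated or in $D_u$), giving $\D(G\odot H)\le \D(H)n(G)$.

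For the lower bound I would construct an explicit orientation. Fix an orientation $H_{f_0}$ of $H$ attaining $\gamma(H_{f_0})=\D(H)$, and use a copy of it on each $H_u$. Orient the edges between $u$ and $V(H_u)$ according to the case: in the first case orient them so that $u$ does not help dominate $H_u$ cheaply — concretely, pick an orientation of $H+K_1$ realizing $\D(H+K_1)=\D(H)$ and transplant it onto each $\{u\}\cup V(H_u)$; in the second case, choose the orientation of $H+K_1$ realizing $\D(H+K_1)=\D(H)+1$ and transplant that. Finally orient the edges of $G$ itself by a fixed orientation $G_{f_1}$ with $\gamma(G_{f_1})=\D(G)$. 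Call the resulting orientation $\Phi$. The claim is that $\gamma(\Phi)$ equals the asserted value, and the lower bound follows since $\D(G\odot H)\ge \gamma(\Phi)$.

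The main work — and the step I expect to be the real obstacle — is the lower bound argument that $\gamma(\Phi)$ is as large as claimed, i.e.\ analyzing an arbitrary dominating set $D$ of $\Phi$. The key structural point is that within each block $\{u\}\cup V(H_u)$, the only vertices that can dominate a vertex of $H_u$ are vertices of $\{u\}\cup V(H_u)$ itself (no edge leaves $H_u$ except to $u$, and $u$'s out-neighbors outside are in $V(G)$), so $|D\cap(\{u\}\cup V(H_u))|\ge \gamma((H+K_1)_{\text{transplanted}})$ minus a possible saving when $u\in D$. In the first case, $(H+K_1)$ restricted appropriately still needs $\D(H)$ vertices from inside $V(H_u)$ even if $u\in D$ (this is exactly what $\D(H+K_1)=\D(H)$ buys us, after checking that removing $u$ from consideration as a dominator still leaves $H_{f_0}$ needing $\D(H)$), so $|D\cap V(H_u)|\ge\D(H)$ for each $u$, giving $|D|\ge \D(H)n(G)$. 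In the second case one argues: if $u\notin D$ then $|D\cap V(H_u)|\ge \D(H)+1$ (since $u$ provides no domination into $H_u$... wait — one must be careful about the orientation of $uu'$ edges); the cleaner route is to show that for \emph{every} $u\in V(G)$, $|D\cap(\{u\}\cup V(H_u))|\ge \D(H)+[u\in D]$, equivalently the vertices of $D$ lying in $V(H_u)$ number at least $\D(H)$ always and $u$'s membership in $D$ is "wasted" on dominating $H_u$; then additionally the set $D\cap V(G)$ must dominate $G_{f_1}$ once we account for which $u$ are dominated from inside their own block versus from $V(G)$, forcing $|D\cap V(G)|\ge \D(G)$ on top of the $\D(H)n(G)$ already counted. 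Making this last counting rigorous — disentangling the roles of a vertex $u\in D$ as both a potential dominator inside $H_u$ and a dominator along $G$ — is the delicate part, and the construction must be chosen (via the extremal orientation of $H+K_1$) precisely so that a vertex $u\in D$ cannot simultaneously save a vertex inside $H_u$ and contribute to dominating $G$.
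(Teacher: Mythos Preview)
Your overall architecture matches the paper's: Lemma~\ref{lem:partition} (or equivalently the spanning-subgraph trick) for the upper bounds, and an explicit orientation built from extremal orientations of $H+K_1$ and of $G$ for the lower bound in the second case. There are two genuine gaps, however.

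First, for the lower bound when $\D(H+K_1)=\D(H)$, your construction is both unnecessary and not quite sound as stated: merely picking an orientation of $H+K_1$ with $\gamma=\D(H)$ does \emph{not} force every dominating set to use $\D(H)$ vertices from $V(H_u)$---the vertex $u$ could still have out-neighbors in $H_u$ and help dominate it. The parenthetical ``this is exactly what $\D(H+K_1)=\D(H)$ buys us'' is backwards: that hypothesis is an \emph{upper} bound on how hard $H+K_1$ can be to dominate, not a lower one. The paper sidesteps the issue entirely: the subgraph of $G\odot H$ induced by $\bigcup_u V(H_u)$ is a disjoint union of $n(G)$ copies of $H$, so Lemma~\ref{lem:induced-spanning}(i) gives $\D(G\odot H)\ge n(G)\D(H)$ in one line, no construction needed.

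Second, in the ``delicate part'' of the second case your tentative route---showing that $D\cap V(G)$ dominates $G_{f_1}$---fails: a vertex $u\notin D$ may be dominated by a vertex of $D\cap V(H_u)$ via an arc into $u$, so $D\cap V(G)$ need not dominate $u$ in $G$. The paper's resolution is to partition $V(G)$ not by membership in $D$ but by the count $|D\cap V(H'_u)|$, where $H'_u$ is the subgraph induced by $V(H_u)\cup\{u\}$: set $V_1=\{u:|D\cap V(H'_u)|=k\}$ (with $k=\D(H)$) and $V_2=V(G)\setminus V_1$. Your own inequality $|D\cap V(H'_u)|\ge k+[u\in D]$ already gives $u\in V_1\Rightarrow u\notin D$; moreover, for $u\in V_1$ the $k$ vertices of $D\cap V(H_u)$ cannot dominate $u$ in $H'_u$ (else they would dominate all of $H'_u$ with only $k<k+1$ vertices), so $u$ must be dominated by a $G$-neighbor lying in $D\cap V(G)\subseteq V_2$. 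Hence it is $V_2$, not $D\cap V(G)$, that dominates $G_{f_1}$, giving $|V_2|\ge\D(G)$ and then
\[
|D|=\sum_{u\in V(G)}|D\cap V(H'_u)|\ge k|V_1|+(k+1)|V_2|=k\,n(G)+|V_2|\ge \D(H)n(G)+\D(G).
\]
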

\begin{proof}
By Lemma~\ref{lem:join-universal}, $\D(H+ K_1)\in \{\D(H),\D(H)+1\}$.

First, consider the case when $\D(H+ K_1)=\D(H)$. Consider the subgraph $X$ of $G\odot H$ induced by $\bigcup_{u\in V(G)}{V(H_u)}$. Clearly, $X$ is the disjoint union of $n(G)$ copies of $H$, hence $\D(X)=n(G)\D(H)$. By Lemma~\ref{lem:induced-spanning}(i), $\D(G\odot H)\ge n(G)\D(H)$. Consider next the spanning subgraph $Y$ of $G\odot H$ obtained from $G\odot H$ by removing all the edges of $G$. Then, $Y$ is the disjoint union of $n(G)$ copies of $H\odot K_1$, and so $\D(Y)=n(G)\D(H\odot K_1)=n(G)\D(H)$. By Lemma~\ref{lem:induced-spanning}(ii), $\D(G\odot H)\le \D(Y)=n(G)\D(H)$.

Second, let $\D(H+ K_1)=k+1$, where $k=\D(H)$. Considering the induced subgraphs $G$ and $H_u$ for all $u\in V(G)$, Lemma~\ref{lem:partition} implies that $\D(G\odot H)\le \D(H)n(G)+\D(G)$.
For the reversed inequality, we construct an orientation $f$ of $G\odot H$ as follows. Let $h$ be an orientation of $H+ K_1$ such that $\gamma((H+ K_1)_h)=\D(H+ K_1)$, and let $g$ be an orientation of $G$ such that $\gamma(G_g)=\D(G)$. Now, the orientation $f$ of $G\odot H$ is defined by using $h$ on the edges of the subgraph $H'_u$ induced by $V(H_u)\cup\{u\}$ for each $u\in V(G)$, and by using $g$ on the edges of $G$ in $G\odot H$. Let $D$ be a minimum dominating set of $(G\odot H)_f$.
We claim that $|D\cap V(H'_u)|\ge k$. The inequality follows from the fact that all vertices in $V(H_u)$ are dominated only by vertices in $H'_u$, and $\gamma((H+ K_1)_h)=k+1$. Indeed, if $|D\cap V(H'_u)|\le k-1$, then we get a contradiction because $(D\cap V(H'_u))\cup\{u\}$ would be a dominating set of $(H_u')_h$ of size at most $k$.
Let $V_1=\{u\in V(G):\, |D\cap V(H_u')|=k\}$, and $V_2=V(G)-V_1$. We note that $u\in V_1$ implies $u\notin D$ for otherwise $(H_u')_h$ would be dominated by $k$ vertices. For the same reason, every $u\in V_1$ has to be dominated by a vertex in $D\cap V_2$. This implies that $V_2$ is a dominating set of $G_g$, which yields $|V_2|\ge \D(G)$. Therefore,
\begin{align*}
|D|&\ge |V_1|k+|V_2|(k+1)\\
&=k(|V_1|+|V_2|)+|V_2|\\
&=\D(H)n(G)+|V_2|\\
&\ge \D(H)n(G)+\D(G),
\end{align*}
and this completes the proof.
\end{proof}

\section{Cartesian products}
\label{sec:cart}

Recall that the {\em Cartesian product} of two graphs $G$ and $H$, denoted $G\cp H$, is the graph with vertex set $V(G\cp H) = V(G) \times V(H)$, where two vertices $(u, v)$ and $(x, y)$ are adjacent in $G\cp H$ if either $u=x$ and $vy \in E(H)$, or $v=y$ and $ux \in E(G)$.
The Cartesian product of digraphs is defined analogously; see~\cite{HIK}.

We first give general bounds for $\D(G\cp H)$.
\begin{thm}\label{thm:Cart} For any graphs $G$ and $H$,
\begin{align*}
\D(G\cp H) & \ge \max\{\D(G)\alpha(H), \alpha(G)\D(H)\} \\
\D(G\cp H) & \le \min\{\D(G)n(H), n(G)\D(H)\},
\end{align*}
and the bounds are sharp.
\end{thm}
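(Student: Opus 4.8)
The plan is to get the upper bound from the layer decomposition together with Lemma~\ref{lem:partition}, the lower bound from a single explicitly constructed orientation, and sharpness from a couple of small examples.

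For the upper bound, note that the $n(H)$ $G$-layers of $G\cp H$, that is, the sets $V(G^w)=\{(u,w):u\in V(G)\}$ for $w\in V(H)$, partition $V(G\cp H)$ and each of them induces a copy of $G$. Lemma~\ref{lem:partition} therefore gives $\D(G\cp H)\le\sum_{w\in V(H)}\D(G^w)=n(H)\,\D(G)$, and since $G\cp H\cong H\cp G$ the symmetric computation yields $\D(G\cp H)\le n(G)\,\D(H)$; together these give the asserted minimum.

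For the lower bound I would exhibit an orientation $\varphi$ of $G\cp H$ with $\gamma((G\cp H)_\varphi)\ge\D(G)\,\alpha(H)$. Fix a maximum independent set $I\subseteq V(H)$. For each $w\in I$ orient the layer $G^w$ by an orienting mapping $f_w$ achieving $\gamma((G^w)_{f_w})=\D(G)$, and orient the remaining layers arbitrarily. It remains to orient each crossing edge $(u,w)(u,w')$ with $ww'\in E(H)$: since $I$ is independent, not both of $w,w'$ lie in $I$, so if (say) $w\in I$ we direct this edge as $((u,w),(u,w'))$, i.e.\ \emph{out of} the $I$-layer, while edges joining two non-$I$ layers are directed arbitrarily. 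The effect is that no arc of $\varphi$ enters a layer $G^w$ with $w\in I$ from outside $V(G^w)$. Hence, if $D$ is a dominating set of $(G\cp H)_\varphi$, then for every $w\in I$ each vertex of $V(G^w)\setminus D$ is dominated from within $V(G^w)$, so $D\cap V(G^w)$ is a dominating set of the induced subdigraph $(G^w)_{f_w}$ and $|D\cap V(G^w)|\ge\D(G)$; summing over the $\alpha(H)$ pairwise disjoint layers indexed by $I$ gives $|D|\ge\D(G)\,\alpha(H)$. Interchanging the roles of $G$ and $H$ yields $\D(G\cp H)\ge\alpha(G)\,\D(H)$, completing the lower bound.

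For sharpness, the upper bound is attained by $K_2\cp K_2=C_4$, since $\D(C_4)=\alpha(C_4)=2=\min\{\D(K_2)n(K_2),\,n(K_2)\D(K_2)\}$ by Theorem~\ref{thm:bounds}(i). Both bounds are attained simultaneously whenever one factor is edgeless: $\overline{K_m}\cp G$ is the disjoint union of $m$ copies of $G$, hence $\D(\overline{K_m}\cp G)=m\,\D(G)$, while using $\alpha(G)\le\D(G)\le n(G)$ (Theorem~\ref{thm:bounds}(i)) one checks $\max\{m\,\alpha(G),m\,\D(G)\}=m\,\D(G)=\min\{m\,n(G),m\,\D(G)\}$. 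The step that needs the most care is the lower-bound construction: one must verify that directing all crossing edges out of the independent-set layers genuinely leaves those layers with no incoming arcs, and that this forces $\D(G)$ members of $D$ inside each of them; the rest is routine bookkeeping, once one also fixes the small conventions (e.g.\ $\D(K_1)=\alpha(K_1)=1$) so that the degenerate instances of the statement are consistent.
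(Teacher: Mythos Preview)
Your arguments for the two inequalities are exactly the paper's: the layer partition with Lemma~\ref{lem:partition} for the upper bound, and the ``direct all crossing edges out of the independent-set layers'' orientation for the lower bound. The only substantive difference is in the sharpness examples. You use $K_2\cp K_2=C_4$ for the upper bound and the degenerate product $\overline{K_m}\cp G$ (a disjoint union) for both bounds; these are correct and quick. The paper instead shows the upper bound is attained for any pair of bipartite graphs with balanced bipartitions, and for the lower bound carries out a case analysis proving $\D(P_3\cp K_3)=4=\alpha(P_3)\D(K_3)$. The paper's examples cost more work but demonstrate sharpness with connected, nontrivial factors (and the value $\D(P_3\cp K_3)=4$ is reused later to pin down $\D(K_3\cp K_3)$); your examples are perfectly valid for the theorem as stated but are arguably less informative, since an edgeless factor collapses the product to a disjoint union.
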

\begin{proof}
Let $A$ be an $\alpha(H)$-set and let $G_f$ be an orientation of $G$ so that $\gamma(G_f) = \D(G)$.
Let $H_g$ be an arbitrary orientation of $H$.
Define the following mapping $h:E(G\cp H) \to V(G\cp H)\times V(G\cp H)$ by
\[h((u_i, v_j)(u_k, v_{\ell})) = \begin{cases} ((u_i, v_j),(u_k, v_j)); & \text{$j=\ell$ and $f(u_iu_k)=(u_i,u_k)$},\\
((u_i, v_j),(u_i, v_{\ell})); & \text{$i=k$ and $v_j \in A$},\\
((u_i, v_j),(u_i, v_{\ell})); & \text{$i=k$, $\{v_j, v_{\ell}\}\cap A=\emptyset$ and $g(v_jv_\ell)=(v_j,v_\ell)$.}
\end{cases}
\]
Considering the orientation $(G\cp H)_h$, for each $v \in A$, the only way to dominate vertices in $V(G)\times \{v\}$ is by vertices within $V(G)\times \{v\}$. Thus, $\gamma((G\cp H)_h)\ge \alpha(H)\D(G)$. Reversing the roles of $G$ and $H$, we have \[\D(G\cp H) \ge \max\{\D(G)\alpha(H), \alpha(G)\D(H)\}.\]

Partition the vertex set of $G\cp H$ into subsets $V(G)\times\{v\}$, for all $v\in V(H)$, and denote the subgraphs induced by these subsets by $G_v$. Applying Lemma~\ref{lem:partition}, we get
\[\D(G\cp H) \le \sum_{v\in V(H)} \D(G_v) = n(H)\D(G).\]

To see that the upper bound is sharp, consider bipartite graphs $G$ and $H$ such that $\alpha(H)=n(H)/2$ and $\alpha(G)=n(G)/2$. Thus, $V(H)$ can be partitioned into $\alpha$-sets $B$ and $B'$, and $V(G)$ can be partitioned into $\alpha$-sets $A$ and $A'$. Since $G\cp H$ is bipartite, we get $\D(G\cp H)=\alpha(G\cp H)$, by Theorem~\ref{thm:bounds}(i). Note that $(A\times B)\cup (A'\times B')$ is an $\alpha$-set of $G\cp H$, thus $$\D(G\cp H)=|A|\cdot|B|+|A'|\cdot|B'|=(|A|+|A'|)n(H)/2=n(G)n(H)/2.$$  On the other hand, $$\min\{\D(G)n(H), n(G)\D(H)\}=\min\{\alpha(G)n(H),\alpha(H)n(G)\}=n(G)n(H)/2,$$ which shows that the upper bound is indeed attained for such graphs $G$ and $H$.

The lower bound is also sharp and can be attained by taking the graphs $K_3$ and $P_3$. Let $X=P_3\cp K_3$. Set $V(P_3)=\{a,x,y\}$ with $E(P_3)=\{ax,ay\}$, and $V(K_3)=[3]$. Consider an arbitrary orientation $X_f$ of $X$. Assume without loss of generality that $((a,1),(a,2))\in A(X_f)$. If $((a,1),(x,1))\in A(X_f)$, then the vertex $(a,1)$ together with one vertex from each of the edges of the matching $\{(x,2)(x,3), (a,3)(y,3),(y,2)(y,1)\}$ gives us $\gamma(X_f)\le 4$. More generally, if a vertex of $X_f$ dominates a vertex in its copy of $K_3$ and a vertex in its copy of $P_3$, then we can find a dominating set of cardinality at most $4$. 
Using this argument, we can assume that $((x,1),(a,1)),((x,2),(x,1)), ((a,2),(x,2)), ((a,3),(a,2))\in A(X_f)$ and $((x,3),(a,3)), ((x,2),(x,3))\in A(X_f)$. Now, the vertex $(x,2)$ together with one vertex from each of the edges of the matching $\{(a,3)(y,3), (a,2)(y,2),(a,1)(y,1)\}$ yields $\gamma(X_f)\le 4$.
We have thus seen that $\D(P_3\cp K_3)\le 4$. The equality 
\begin{equation}
\label{eq:P3K3}
\D(P_3\cp K_3)=4\end{equation}
now follows from the fact that $\D(P_3\cp K_3) \ge  \alpha(P_3)\D(K_3)=2 \times 2$.
\end{proof}

We think that the lower bound in Theorem~\ref{thm:Cart} might not be attainable for graphs $G$ and $H$ with large enough order. We can verify that the lower bound is not attained in the case when $G$ and $H$ are nontrivial, connected bipartite graphs. The lower bound then reads $\alpha(G\cp H)\ge \alpha(G)\alpha(H)$, by Theorem~\ref{thm:bounds}(i). It was proved by Vizing that $\alpha(G\cp H)\ge \alpha(G)\alpha(H)+\min\{n(G)-\alpha(G),n(H)-\alpha(H)\}$ for any non-trivial graphs $G$ and $H$; see~\cite{HIK}. Thus, the lower bound in Theorem~\ref{thm:Cart} is not attained if both $G$ and $H$ are nontrivial, connected bipartite graphs.

Vizing's conjecture~\cite{v1968} from 1968 concerning the ordinary domination number of a graph in the Cartesian product of graphs is one of the main open problems in graph domination. The conjecture states that for any two graphs $G$ and $H$ the domination number $\gamma(G\cp H)$ of the Cartesian product of $G$ and $H$ is at least as big as the product $\gamma(G)\gamma(H)$ of their domination numbers. The inequality $\psi(G\ast H)\ge \psi(G)\psi(H)$, where $\psi$ is a graph invariant and $\ast$ is a product operation in graphs is often referred to as a Vizing-like inequality. 

We point out that if $G$ or $H$ is bipartite, then Theorem~\ref{thm:Cart} says
\[\D(G)\D(H) = \max\{\D(G)\alpha(H), \alpha(G)\D(H)\} \le \D(G\cp H).\]
We suspect a Vizing-like bound holds for $\D(G\cp H)$ which we formally pose as a problem.

\begin{prob} Is it true that $\D(G\cp H)\ge \D(G)\D(H)$ holds for any two graphs $G$ and $H$?
\end{prob}

In studying the above problem, it is important to note that given two directed graphs $D_1$ and $D_2$ it may be the case that  $\gamma(D_1\cp D_2) < \gamma(D_1)\gamma(D_2)$. For example, we know that $\gamma(\overrightarrow{C_3}) =2$ for the directed cycle $\overrightarrow{C_3}$. However, one can easily verify that $\gamma(\overrightarrow{C_3}\cp \overrightarrow{C_3}) = 3$. 

On the other hand, we can find an orientation showing that $\D(K_3\cp K_3) \ge 4$; see Fig.~\ref{fig:K_3boxK_3}. Let $G=K_3\cp K_3$.  Note that the black vertices are a dominating set of $G_f$. To see that $\gamma(G_f) \ge 4$, suppose there exists a dominating set $D$ of $G_f$ of cardinality $3$. Since each vertex of $G_f$ has out-degree $2$, then $D$ is an independent set in $K_3\cp K_3$. One can easily verify that each of the six $\alpha$-sets of $K_3 \cp K_3$ is not a dominating set of $G_f$. Thus, $\gamma(G_f) = 4 \le \D(G)$. Noting that $P_3\cp K_3$ is a spanning subgraph of $K_3\cp K_3$, combined with~\eqref{eq:P3K3}, and using Lemma~\ref{lem:induced-spanning}(ii), we infer $\D(K_3 \cp K_3)\le 4$. We conclude that
$\D(K_3\cp K_3)=4.$

\begin{figure}[h!]
\begin{center}
\begin{tikzpicture}[scale=2]

	\vertex (1) at (1, 0) [scale=1pt,fill=black]{};
	\vertex (2) at (2, 0) []{};
	\vertex (3) at (3, 0) []{};
	\vertex (4) at (1, 1) []{};
	\vertex (5) at (2, 1) []{};
	\vertex (6) at (3, 1) [fill=black]{};
	\vertex (7) at (1, 2) [fill=black]{};
	\vertex (8) at (2, 2) []{};
	\vertex (9) at (3, 2) [fill=black]{};
	\path
	(1) edge[->,very thick] (2)
	(2) edge[->,very thick] (3)
	(3) edge[->,very thick, bend right=30] (1)
	(4) edge[->,very thick, bend right=30] (6)
	(5) edge[->,very thick] (4)
	(6) edge[->,very thick] (5)
	(7) edge[->,very thick] (8)
	(8) edge[->,very thick] (9)
	(9) edge[->,very thick, bend right = 30] (7)
	(1) edge[->,very thick] (4)
	(4) edge[->,very thick] (7)
	(7) edge[->,very thick, bend right=30] (1)
	(5) edge[->,very thick] (2)
	(8) edge[->,very thick] (5)
	(2) edge[->,very thick, bend left=30] (8)
	(3) edge[->,very thick] (6)
	(6) edge[->,very thick] (9)
	(9) edge[->,very thick,bend right=30] (3)
	
	;

\end{tikzpicture}
\end{center}
\caption{The orientation $G_f$ of $G = K_3\cp K_3$}
\label{fig:K_3boxK_3}
\end{figure}

Next, we focus on the prism of a cycle. Theorem~\ref{thm:Cart} implies that $\D(C_n \cp K_2) \le n$.

\begin{prop} If $n \ge 2$, then $\D(C_n\cp K_2) = n$.
\end{prop}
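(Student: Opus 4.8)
The upper bound $\D(C_n\cp K_2)\le n$ is already recorded just before the statement as a consequence of Theorem~\ref{thm:Cart}, so the plan is to exhibit a single orientation $f$ of $C_n\cp K_2$ with $\gamma\big((C_n\cp K_2)_f\big)\ge n$. Write $V(C_n)=\Z_n$ and $V(K_2)=\{0,1\}$, so $V(C_n\cp K_2)=\Z_n\times\{0,1\}$; I will call the vertices $(i,0)$ the \emph{bottom layer}, the vertices $(i,1)$ the \emph{top layer}, and each edge $(i,0)(i,1)$ a \emph{rung}. For $n\ge 3$ I would define $f$ by orienting both the bottom cycle and the top cycle consistently ``forward'', i.e.\ $(i,0)\to(i+1,0)$ and $(i,1)\to(i+1,1)$ for all $i\in\Z_n$, and by orienting every rung from bottom to top, $(i,0)\to(i,1)$. (The degenerate case $n=2$, if one wishes to include it, is immediate: $C_2\cp K_2\cong C_4$ is bipartite with $\alpha(C_4)=2$, so $\D(C_4)=2$ by Theorem~\ref{thm:bounds}(i).)

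Next I would take an arbitrary dominating set $D$ of $(C_n\cp K_2)_f$ and split it as $D=D_b\cup D_t$, where $D_b$ lies in the bottom layer and $D_t$ in the top layer; I identify $D_b$ and $D_t$ with the corresponding subsets of $\Z_n$. The first observation is that the unique in-neighbor of $(i,0)$ is $(i-1,0)$, so a bottom-layer vertex can be dominated only from within the bottom layer; hence $D_b$ must dominate the directed cycle on the bottom layer, which is equivalent to saying that the complement $T:=\Z_n\setminus D_b$ contains no two cyclically consecutive elements (otherwise some $(i,0)$ with $i-1\in T$ is undominated). The second observation concerns the top layer: the in-neighbors of $(i,1)$ are exactly $(i-1,1)$ and $(i,0)$, so whenever $i\notin T$ the vertex $(i,1)$ is automatically dominated (because then $(i,0)\in D$), while for each $i\in T$ we must have $i\in D_t$ or $i-1\in D_t$.

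The counting step is then short. Each element $j\in D_t$ can ``serve'' only the two indices $j$ and $j+1$ in the sense of the second observation; since $T$ has no two cyclically consecutive elements, at most one of $j,j+1$ belongs to $T$, so $j$ accounts for at most one element of $T$. As every element of $T$ must be served, this gives $|D_t|\ge|T|$. Combining with $|T|=n-|D_b|$ we obtain
\[
|D|=|D_b|+|D_t|\ge |D_b|+|T| = |D_b|+\big(n-|D_b|\big)=n,
\]
so $\gamma\big((C_n\cp K_2)_f\big)\ge n$, and together with the upper bound this yields $\D(C_n\cp K_2)=n$.

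The only genuine obstacle here is guessing an orientation that makes the bottom layer ``self-contained'' with respect to domination; once that is in place the argument is elementary. The point to be careful about is precisely the first observation — that bottom-layer vertices receive no arc from the top layer — because this is what forces $T$ to be sparse and hence upgrades the naive estimate $|D_t|\ge|T|/2$ to the sharp $|D_t|\ge|T|$ needed to reach $n$.
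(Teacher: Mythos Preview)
Your proof is correct and follows essentially the same approach as the paper: the same orientation (both cycles directed forward, all rungs directed from one layer to the other), the same observation that the bottom layer must be dominated internally so that the ``missing'' indices $T$ form an independent set on the cycle, and the same counting step showing that each top-layer vertex of $D$ can cover at most one index of $T$, giving $|D_t|\ge|T|$ and hence $|D|\ge n$. The paper phrases the last step slightly differently (defining $S$ as the set of top-layer vertices not dominated by $D_1$ and noting $S$ is independent), but the content is identical.
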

\begin{proof} Let $V(C_n)=\{v_1,\ldots, v_n\}$ with vertices ordered in the natural order and let $V(K_2)=[2]$.
Let $(C_n\cp K_2)_h$ be the orientation of $C_n \cp K_2$ with arcs
\[\{((v_i, j),(v_{i+1}, j)):\, i\in[n],j\in [2], v_{n+1}=v_1\}\bigcup \{((v_i, 1),(v_i, 2)): i \in [n]\}.\]
Let $D$ be a dominating set of $(C_n\cp K_2)_h$, and $D_i=D\cap \{(v,i):\, v\in V(C_n)\}$, for $i\in [2]$.
Note that each vertex of the form $(v_i, 1)$ is dominated by a vertex in $D_1$, which implies that there exists no $i\in [n]$ such that $\{(v_i,1),(v_{i+1},1)\}\cap D= \emptyset$. Hence, the set $S$ of vertices in $\{(v,2):\, v\in V(C_n)\}$ that are not dominated by $D_1$ is independent. We derive that in order to dominate the vertices of $S$  one needs at least $|S|$ vertices from $D_2$, which implies $|D|=|D_1|+|D_2|\ge |D_1|+|S|=|V(C_n)|$.
\end{proof}

Given a graph $G$ let ${\rm bip}(G)$ denote the  maximum order of a bipartite induced subgraph of $G$.

\begin{prop}
\label{prp:prism}
If $G$ is a graph, then $${\rm bip}(G)\le \D(G\cp K_2)\le n(G).$$
\end{prop}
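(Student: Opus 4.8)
The upper bound $\D(G\cp K_2)\le n(G)$ is immediate from Theorem~\ref{thm:Cart} (taking $H=K_2$, so $n(H)=2$ and $\D(H)=1$, the bound $\min\{\D(G)\cdot 2, n(G)\cdot 1\}$ gives $n(G)$). So the work is entirely in the lower bound $\D(G\cp K_2)\ge {\rm bip}(G)$. The plan is to exhibit an explicit orientation of $G\cp K_2$ whose domination number is at least ${\rm bip}(G)$. Let $B\subseteq V(G)$ induce a bipartite subgraph of $G$ of maximum order, so $|B|={\rm bip}(G)$, and fix a bipartition $B=B_0\cup B_1$ of that induced subgraph.

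The idea is to orient $G\cp K_2$ so that the $n(G)$ ``vertical'' $K_2$-edges, together with the edges inside $B$, force many dominators. Write the two layers as $V(G)\times\{1\}$ and $V(G)\times\{2\}$. First I would orient the vertical edges according to the bipartition: for $v\in B_0$ direct $(v,1)\to(v,2)$, for $v\in B_1$ direct $(v,2)\to(v,1)$, and for $v\notin B$ orient the vertical edge arbitrarily. Next, orient the horizontal edges inside each layer so as to cut off the ``exposed'' endpoints of the vertical edges from being dominated horizontally within $B$: concretely, for an edge $uv$ of $G$ with both $u,v\in B$, one wants the vertex $(v,2)$ for $v\in B_0$ (which has no vertical in-neighbor) and $(v,1)$ for $v\in B_1$ to not be dominated by the $B$-vertices adjacent to it in its own layer — this is arranged by directing, in layer~$2$, every $B$-edge incident to a $B_0$-vertex $v$ away from $(v,2)$, and symmetrically in layer~$1$; since $B$ is bipartite, the edges inside $B$ go between $B_0$ and $B_1$, so these two requirements are consistent and can be met. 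Edges with an endpoint outside $B$, and the layer placement of $V(G)\setminus B$ vertices, can be oriented arbitrarily.

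The key claim is then: in this orientation, for each $v\in B$ the vertex $x_v$ — where $x_v=(v,2)$ if $v\in B_0$ and $x_v=(v,1)$ if $v\in B_1$ — can only be dominated by a vertex in the set $S_v:=\{(v,1),(v,2)\}\cup(\{\text{$B$-neighbors of $v$ in }G\}$ placed in the same layer as $x_v)$, and more carefully, that any dominating set $D$ must contain, for each $v\in B$, a vertex lying ``over'' $v$ or over a $B$-neighbor of $v$, in a way that an averaging/counting argument forces $|D|\ge |B|$. The cleanest way to make this rigorous is: $x_v$ has in-neighbors only among $(v,1)$ or $(v,2)$ (the vertical edge, which by construction points the wrong way) — wait, so in fact $x_v$ has no vertical in-neighbor, and by the horizontal orientation inside $B$ its only in-neighbors are vertices over $B$-neighbors of $v$; so to dominate $x_v$ one needs either $x_v\in D$ itself or a vertex over a $B$-neighbor of $v$ in $D$. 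Counting membership: assign to each $v\in B$ a ``witness'' $w(v)\in V(G)$ with $(w(v),\cdot)\in D$ and $w(v)\in\{v\}\cup N_G(v)$; since $G[B]$ is bipartite of max order, a short argument (each vertex over some $u\in V(G)$ can witness at most... the vertices $v\in B$ with $u\in\{v\}\cup N_G(v)$) should be tuned so that the total count is $\ge|B|$. The main obstacle — and the part I'd spend the most care on — is exactly this final counting step: getting the orientation of the horizontal $B$-edges to guarantee that no single $D$-vertex can ``cover'' two different $x_v$'s, or else that the double-coverings are few enough. If a naive orientation allows one vertex to dominate two $x_v$'s, one fixes it by orienting the $B$-edges in each layer acyclically (using the bipartition to make layer~$2$'s $B$-edges all point from $B_1$-side to $B_0$-side and layer~$1$'s all point from $B_0$-side to $B_1$-side), which kills the in-arcs into the $x_v$'s entirely, so that each $x_v$ forces $x_v\in D$ and hence $D\supseteq\{x_v:v\in B\}$, giving $|D|\ge|B|={\rm bip}(G)$ at once.
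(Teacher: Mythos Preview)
Your final idea is exactly the paper's, but you are rebuilding machinery that is already available and your explicit orientation has sign errors.

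Note first that your set $\{x_v:v\in B\}=(B_0\times\{2\})\cup(B_1\times\{1\})$ is an \emph{independent set} of $G\cp K_2$: two vertices in the same layer lie over $B_0$ (or over $B_1$), which is independent in $G[B]$ and hence in $G$; two vertices in different layers are adjacent only if they share the first coordinate, which is impossible since $B_0\cap B_1=\emptyset$. Once you see this, the lower bound is immediate from Theorem~\ref{thm:bounds}(i): $\D(G\cp K_2)\ge\alpha(G\cp K_2)\ge|B|={\rm bip}(G)$. That is precisely the paper's two-line argument (with the roles of the layers swapped). Your long orientation construction is really just re-proving $\D\ge\alpha$ in this special case by directing all edges away from the independent set $\{x_v\}$.

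As written, though, your orientation does not do what you claim. You direct the vertical edge at $v\in B_0$ as $(v,1)\to(v,2)$ and then assert that $x_v=(v,2)$ ``has no vertical in-neighbor''; it does, namely $(v,1)$. Likewise, in the last sentence you say layer-$2$ $B$-edges point ``from $B_1$-side to $B_0$-side'', which sends arcs \emph{into} $(v,2)$ for $v\in B_0$, the opposite of what you need (and the opposite of what you said two paragraphs earlier). The counting sketch in the middle is also not needed: once the $\{x_v\}$ are seen to be independent, you can consistently orient every incident edge away from them, each $x_v$ has in-degree~$0$, and $\{x_v\}\subseteq D$ for every dominating set $D$. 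No averaging argument is required, and the edges to $V(G)\setminus B$ cannot be ``oriented arbitrarily'' --- they too must point away from the $x_v$'s.
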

\begin{proof}
The upper bound follows from Theorem~\ref{thm:Cart}. For the lower bound consider the largest bipartite induced subgraph of $G$ and let $S$ and $T$ be the sets of its bipartition. Since $(S\times \{1\})\cup (T\times \{2\})$ is an independent set, it follows from Theorem~\ref{thm:bounds}(i) that
$\D(G\cp K_2)\ge |S|+|T|={\rm bip}(G)$.
\end{proof}

If $G$ is bipartite, then ${\rm bip}(G)=n(G)$, thus Proposition~\ref{prp:prism} implies $\D(G\cp K_2)=n(G)$.

\section{Lexicographic products}
\label{sec:lex}

Let $G$ and $H$ be two graphs. The {\em lexicographic product} $G\circ H$ of $G$ and $H$ is the graph with $V(G\circ H)=V(G)\times V(H)$ and two vertices $(x,y)$ and $(u,v)$ are adjacent in $G\circ H$ if either $xu\in E(G)$, or $x=u$ and $yv\in E(H)$.

\begin{prop}\label{prp:lexicographic}
If $G$ and $H$ are arbitrary graphs, then
$$\alpha(G)\D(H)\le \D(G\circ H)\le \min\{\D(G)n(H),\D(H)n(G)\}.$$
\end{prop}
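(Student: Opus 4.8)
The plan is to prove the three inequalities separately, relying on the structural results already available. For the lower bound $\alpha(G)\D(H)\le \D(G\circ H)$, I would mimic the construction used for the Cartesian lower bound in Theorem~\ref{thm:Cart}: fix an $\alpha(G)$-set $A$ and an orientation $H_g$ of $H$ with $\gamma(H_g)=\D(H)$. Orient $G\circ H$ so that inside each fibre $\{u\}\times V(H)$ with $u\in A$ we copy the orientation $g$, orient all arcs between two distinct fibres consistently in one chosen direction (say using an arbitrary fixed linear order on $V(G)$, always pointing "forward"), and orient the fibres over $V(G)\setminus A$ arbitrarily. The key point is that $A$ is independent in $G$, so no fibre over $A$ receives any arc from another fibre over $A$; hence the vertices of $\{u\}\times V(H)$ for $u\in A$ can only be dominated from within their own fibre, forcing any dominating set to contain at least $\gamma(H_g)=\D(H)$ vertices in each such fibre, and these $|A|$ fibres are pairwise disjoint. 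This yields $\D(G\circ H)\ge |A|\,\D(H)=\alpha(G)\D(H)$.

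For the upper bound $\D(G\circ H)\le \D(H)n(G)$, I would partition $V(G\circ H)$ into the $n(G)$ fibres $\{u\}\times V(H)$, note that the subgraph induced by each fibre is isomorphic to $H$, and apply Lemma~\ref{lem:partition} to get $\D(G\circ H)\le\sum_{u\in V(G)}\D(H)=\D(H)n(G)$. For the other half, $\D(G\circ H)\le \D(G)n(H)$, the natural route is Lemma~\ref{lem:induced-spanning}(ii): exhibit a spanning subgraph of $G\circ H$ that is a disjoint union of $n(H)$ copies of $G$. Indeed, for each $v\in V(H)$ the set $V(G)\times\{v\}$ induces a copy of $G$ (whenever $xu\in E(G)$, $(x,v)(u,v)\in E(G\circ H)$), and removing all remaining edges leaves exactly the disjoint union $\bigcup_{v\in V(H)}G$; this spanning subgraph has orientable domination number $n(H)\D(G)$, so by Lemma~\ref{lem:induced-spanning}(ii), $\D(G\circ H)\le n(H)\D(G)$. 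Combining the two upper bounds gives the minimum.

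The step I expect to require the most care is the lower bound, specifically the claim that in the constructed orientation the fibres over $A$ are "isolated from above": I must make sure that the chosen orientation of the between-fibre edges and of the other fibres does not accidentally let an outside vertex dominate some $(u,y)$ with $u\in A$. Since $A$ is independent, for $u\in A$ every neighbour fibre $\{w\}\times V(H)$ has $uw\in E(G)$ only if $w\notin A$, and I am free to orient all such cross edges from the $A$-side outward (or, more carefully, to choose a global order making the $A$-fibres "sinks" is impossible, so instead I orient each cross edge between a fibre over $A$ and a fibre over $V(G)\setminus A$ from the $A$-fibre to the other). Then no vertex in $\{u\}\times V(H)$, $u\in A$, has an in-arc from outside its fibre, so a dominating set restricted to that fibre must itself dominate the induced copy of $H_g$, giving the bound $\D(H)$ per fibre. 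The rest is bookkeeping. The two upper bounds are routine applications of Lemmas~\ref{lem:partition} and~\ref{lem:induced-spanning}.
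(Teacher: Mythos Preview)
Your proof is correct. The lower bound argument is essentially identical to the paper's: both fix an $\alpha(G)$-set $A$, copy an optimal orientation of $H$ into each fibre, and direct every cross edge incident with an $A$-fibre outward so that those fibres have no incoming arcs.

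For the upper bound the paper takes a slightly more economical path: it simply observes that $G\cp H$ is a spanning subgraph of $G\circ H$, so Lemma~\ref{lem:induced-spanning}(ii) together with the already proved Theorem~\ref{thm:Cart} immediately gives $\D(G\circ H)\le \D(G\cp H)\le \min\{\D(G)n(H),\D(H)n(G)\}$. Your two separate arguments (the $H$-fibre partition via Lemma~\ref{lem:partition}, and the spanning disjoint union of $G$-layers via Lemma~\ref{lem:induced-spanning}(ii)) are exactly the ingredients behind Theorem~\ref{thm:Cart}, so the content is the same; the paper just reuses the earlier result rather than rederiving it. Incidentally, since each layer $V(G)\times\{v\}$ is in fact an \emph{induced} copy of $G$ in $G\circ H$, you could also have obtained the bound $\D(G)n(H)$ directly from Lemma~\ref{lem:partition}, avoiding the spanning-subgraph detour.
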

\begin{proof}
To prove the lower bound consider the following orientation of $G\circ H$. Let $H_f$ be an orientation of $H$ such that $\gamma(H_f)=\D(H)$. For each $x \in V(G)$ consider the subgraph of $G\circ H$ induced by the set $\{(x,y):\, y\in V(H)\}$, and denote it by $H^x$. Clearly, $H^x$ is isomorphic to $H$. Orient the edges of $H^x$ 
consistent with the orienting mapping $f$. (That is, if $f$ maps $ab\in E(H)$ to the arc $(a,b)\in V(H)\times V(H)$, then let $(x,a)(x,b)\in E(G\circ H)$ be mapped to $((x,a),(x,b))\in V(G\circ H)\times V(G\circ H)$.) The edges among vertices with distinct first coordinates are oriented as follows.
Let $A$ be an $\alpha$-set of $G$. For each $u\in A$ and $v\in N_G(u)$ and any $h,h'\in V(H)$, let the edges $(u,h)(v,h')\in E(G\circ H)$ be oriented from $(u,h)$ to $(v,h')$.  The latter orientation yields that vertices in $H^x$, where $x\in A$, can only be dominated by vertices in $H^x$. This establishes the lower bound.

Since $G \cp H$ is a spanning subgraph of $G\circ H$, the upper bound follows immediately from Theorem~\ref{thm:Cart} and Lemma~\ref{lem:induced-spanning}(ii).
\end{proof}

If $G$ is bipartite, then Theorem~\ref{thm:bounds}(i) and Proposition~\ref{prp:lexicographic}
imply $$\D(G)\D(H)\le \D(G\circ H)\le \D(G)n(H),$$ which in turn implies that $$\D(G\circ \overline{K_s})= \D(G)\D(\overline{K_s})=s\cdot\D(G).$$ This shows that both bounds in Proposition~\ref{prp:lexicographic} are sharp.  In particular, the ``Vizing-like'' bound $\D(G)\D(H)\le \D(G\circ H)$ does not hold in general, as can be seen by taking $G=K_3=H$. Note that $K_3\circ K_3=K_9$.  In~\cite[p. 60]{cvy-1996} Chartrand et al. proved that  $\D(K_9)=3$, while $\D(K_3)^2=4$.

The situation when $G$ is not bipartite is much more complex. Proposition~\ref{prp:lexicographic} gives the upper bound $ks+s$ for $\D(C_{2k+1}\circ \overline{K_s})$, which we are able to improve as follows.
\begin{prop}\label{prp:oddcycle}
If $k\ge 2$ and $s\ge 2$, then $$ks\le \D(C_{2k+1}\circ\overline{K_s})\le ks+\left \lfloor \frac{s+1}{2}\right\rfloor.$$
\end{prop}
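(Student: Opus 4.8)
The plan is to prove the two inequalities separately. The lower bound $\D(C_{2k+1}\circ \overline{K_s})\ge ks$ is immediate: since $C_{2k+1}$ contains an induced path $P_{2k}$, and $\alpha(P_{2k})=k$, Proposition~\ref{prp:lexicographic} (or directly Theorem~\ref{thm:bounds}(i) applied to the bipartite induced subgraph $P_{2k}\circ\overline{K_s}$) gives $\D(C_{2k+1}\circ\overline{K_s})\ge \alpha(P_{2k})\D(\overline{K_s})=k\cdot s$. Alternatively one uses $\alpha(C_{2k+1})=k$ in the lower bound of Proposition~\ref{prp:lexicographic}. So the substance of the proposition is entirely in the upper bound.

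For the upper bound I would fix an arbitrary orientation $f$ of $C_{2k+1}\circ\overline{K_s}$ and produce a dominating set of size at most $ks+\lfloor (s+1)/2\rfloor$. Write $V(C_{2k+1})=\{v_0,\dots,v_{2k}\}$ in cyclic order and let $X_i$ denote the $\overline{K_s}$-fiber over $v_i$; note $X_i$ is an independent set of size $s$, while for consecutive fibers $X_i\cup X_{i+1}$ induces a complete bipartite graph $K_{s,s}$ in $C_{2k+1}\circ\overline{K_s}$. The key structural observation is that between two consecutive fibers $X_i,X_{i+1}$, the orientation restricted to the $K_{s,s}$ between them is a bipartite tournament, and a bipartite tournament on parts of size $s,s$ always has a dominating set — for the whole $K_{s,s}$ — of size at most two (in fact one vertex whose out-neighborhood misses at most one vertex on the other side, which can then be mopped up, or directly using that $\D(K_{s,s})=2$ for $s\ge 2$ by Theorem~\ref{thm:bounds}(ii), since $n(K_{s,s})-\alpha'(K_{s,s})=2s-s=s$... — so I'd instead argue combinatorially). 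More usefully: cover the $2k+1$ fibers greedily by $k$ disjoint "consecutive pairs'' $\{X_0,X_1\},\{X_2,X_3\},\dots,\{X_{2k-2},X_{2k-1}\}$ plus one leftover fiber $X_{2k}$. Each pair induces a $K_{s,s}$, whose orientation we can dominate within the pair using a bounded number of vertices, and the leftover fiber $X_{2k}$ — together with help from the arcs it sends to/from $X_0$ and $X_{2k-1}$ — is where the $\lfloor (s+1)/2\rfloor$ term must come from. Concretely, in the $K_{s,s}$ induced by a pair $\{X_{2j},X_{2j+1}\}$, pick a vertex $a\in X_{2j}$ maximizing out-degree into $X_{2j+1}$ and a vertex $b\in X_{2j+1}$ maximizing out-degree into $X_{2j}$; one checks that $\{a,b\}$ dominates the entire pair except possibly when the bipartite tournament is "balanced'' in a way that still leaves everything dominated — so each pair contributes at most $2$, giving $2k$, which is already too big. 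So the greedy pairing must be sharpened: the right approach is to dominate each pair with exactly $s$ vertices in the worst case but observe the arcs leaving a pair help dominate neighboring fibers, i.e.\ to set this up as in the proof of Proposition~\ref{prp:oddcycle}'s analogue for prisms and use a global counting/discharging argument around the odd cycle, accounting for the single "defect'' forced by odd length.

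Here is the cleaner route I would actually pursue. Think of building the dominating set fiber by fiber along a Hamiltonian-type traversal $X_0,X_1,\dots,X_{2k},X_0$. For each $i$, the vertices of $X_{i+1}$ that are \emph{not} dominated by any already-chosen vertex in $X_i\cup X_{i-1}$ must be handled; since $X_{i-1}\cup X_i\cup X_{i+1}$ induces a graph in which $X_{i+1}$ is completely joined to $X_i$ and to $X_{i-1}$, a carefully chosen single vertex of $X_i$ (the one of maximum out-degree into $X_{i+1}$) dominates all but the vertices of $X_{i+1}$ that beat every vertex of $X_i$; those "champions'' in $X_{i+1}$ form an independent set but each has full out-degree into... no — I must be careful that in $\overline{K_s}$ there are no intra-fiber arcs, so a champion vertex in $X_{i+1}$ is \emph{not} dominated by its own fiber and must either be in $D$ or be dominated from $X_{i+2}$. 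Summing the contributions around the cycle with an averaging argument (each fiber "pays'' for roughly $\tfrac s2$ champions on average, but the odd parity forces one fiber to pay an extra $\lceil s/2\rceil$, or equivalently the total is $ks+\lfloor(s+1)/2\rfloor$) yields the bound. I would formalize this by choosing, for each $i\in\{0,\dots,2k\}$, a single "hub'' vertex $c_i\in X_i$ of maximum out-degree into $X_{i+1}$, putting all $c_i$ into $D$ except I then observe these $2k+1$ hubs can be thinned using the pairing, and separately adding the set of vertices of each $X_{i+1}$ not dominated by $c_i$ nor by $c_{i+2}^{-}$ (a max-out-degree hub of $X_{i+2}$ into $X_{i+1}$).

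The main obstacle, and the step I expect to require the most care, is the precise combinatorial lemma that makes the odd cycle lose exactly $\lfloor (s+1)/2\rfloor$ and no more: I need to show that for \emph{every} orientation one can choose the hubs so that the total number of undominated champion-vertices, summed over all fibers, together with the hubs kept, does not exceed $ks+\lfloor(s+1)/2\rfloor$. The even-cycle/path case splits cleanly into $k$ independent $K_{s,s}$ blocks each costing $s$ (via $\D(K_{s,s})\le s$, or the explicit worst orientation), for a total of $ks$; the extra fiber of the odd cycle, by a parity/pigeonhole argument on how many of its $s$ vertices can be "champions simultaneously against both neighbors,'' forces the extra $\lfloor(s+1)/2\rfloor$ rather than a full $s$. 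Making that last step tight — rather than proving only $\D\le ks+s$, which is what a naive block decomposition via Lemma~\ref{lem:partition} gives — is exactly where the real work lies, and I would attack it by a direct case analysis on the arcs incident to the special fiber $X_{2k}$, splitting its $s$ vertices according to whether they dominate into $X_0$, into $X_{2k-1}$, both, or neither.
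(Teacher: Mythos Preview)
Your lower bound is fine and matches the paper's argument exactly: $\alpha(C_{2k+1})=k$ and $\D(\overline{K_s})=s$, so Proposition~\ref{prp:lexicographic} gives $\D(C_{2k+1}\circ\overline{K_s})\ge ks$.

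For the upper bound, however, you have badly overcomplicated matters and in fact never complete a proof. Your fiber-by-fiber hub/champion analysis is left as a sketch, and you yourself flag the crucial step (``making that last step tight \dots\ is exactly where the real work lies'') without carrying it out. There is no reason to believe that the proposed case analysis on the arcs incident to one special fiber would cleanly produce the bound $ks+\lfloor(s+1)/2\rfloor$; the difficulty you identify is genuine for that approach, and you have not resolved it.

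The paper bypasses all of this with a one-line argument you overlooked: $C_{2k+1}\circ\overline{K_s}$ is Hamiltonian (it contains a spanning cycle on $(2k+1)s$ vertices), so it has a matching of size $\lfloor n(G)/2\rfloor$. Then Theorem~\ref{thm:bounds}(ii) gives
\[
\D(G)\le n(G)-\alpha'(G)\le (2k+1)s-\left\lfloor\frac{(2k+1)s}{2}\right\rfloor=\left\lceil\frac{(2k+1)s}{2}\right\rceil=ks+\left\lfloor\frac{s+1}{2}\right\rfloor,
\]
where the last equality is checked separately for $s$ even and $s$ odd. No analysis of any particular orientation is needed; the matching bound $\D(G)\le n(G)-\alpha'(G)$ does all the work. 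You even invoked this bound in passing (when computing $\D(K_{s,s})$) but did not think to apply it globally to $G$ itself.
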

\begin{proof}
Throughout the proof we write $G=C_{2k+1}\circ\overline{K_s}$.
The lower bound follows directly from the lower bound in Proposition~\ref{prp:lexicographic}.
For the upper bound note that $G$ is a Hamiltonian graph. Hence, if $s$ is even, $n(G)$ is even, and thus $G$ has a perfect matching.
By Theorem~\ref{thm:bounds}(ii), $\D(G)\le n(G)-n(G)/2=ks+s/2=ks+\lfloor \frac{s+1}{2}\rfloor$. On the other hand, $s$ odd implies that $\alpha'(G)=\frac{n(G)-1}{2}$, and so $\D(G)\le\frac{(2k+1)s+1}{2}=ks+\lfloor \frac{s+1}{2}\rfloor$.
\end{proof}

\subsection{Generalized lexicographic products}
\label{sec:genlex}

The lexicographic product $G\circ H$ of graphs $G$ and $H$ can be described as the graph obtained from $G$ by replacing each vertex $u$ of $G$ with an isomorphic copy of $H$, say $H_u$, and adding all the edges between $H_u$ and $H_v$ whenever $uv\in E(G)$. This can be generalized by replacing each vertex $u$ of $G$ by an arbitrary graph $H_u$. If ${\cal H} = \{H_u:\, u\in V(G)\}$ is a collection of graphs associated with the vertices of $G$, then the graph constructed in this way is called the {\em generalized lexicographic product} and is denoted by $G\circ {\cal H}$; see~\cite{samodivkin-2021}.

Let $G$ be a graph and ${\cal H} = \{H_u:\, u\in V(G)\}$ be a collection of graphs associated with the vertices of $G$. The argument in the proof of Proposition~\ref{prp:lexicographic} yields the following lower bound and Lemma~\ref{lem:partition} gives the upper bound. If $X$ is an independent set of $G$, then
$$\sum_{u\in X} \D(H_u) \le \D(G\circ {\cal H}) \le \sum_{u\in V(G)} \D(H_u).$$

Chartrand et al.~\cite{cvy-1996} considered the orientable domination number in several families of graphs. In particular, they determined the orientable domination number of the graphs $K_{n,n,n}=K_3\circ \overline{K_n}$ for all positive integers $n$. In this subsection, we extend their result by considering arbitrary complete multipartite graphs. Note that $K_{n_1,\ldots, n_k}$ is the generalized lexicographic product with the first factor $K_k$ and the collection of $k$ edgeless graphs of order $n_1, \ldots, n_k$, respectively.

Given a non-decreasing sequence $n_1\le n_2\le\cdots\le n_k$ of positive integers, where $k\ge 2$, the vertices of the graph $K_{n_1,\ldots, n_k}$ can be partitioned into independent sets $A_1,\ldots, A_k$ such that $|A_i|=n_i$ for every $i\in [k]$ and for $1\le i< j\le k$, every vertex of $A_i$ is adjacent to every vertex of $A_j$.
We use this notation throughout the remainder of this subsection.  If $k=2$, then $K_{n_1,n_2}$ is a complete bipartite graph, and $\D(K_{n_1,n_2})=n_2$ by Theorem~\ref{thm:bounds}(i).

We start by proving a lower and an upper bound that hold for an arbitrary complete multipartite graph.

\begin{prop}
\label{prp:multipartite-general}
Given a non-decreasing sequence $n_1\le n_2\le\cdots\le n_k$ of positive integers, where $k\ge 2$, we have $$n_k\le\D(K_{n_1,\ldots, n_k})\le \max\{n_k,k\}.$$
\end{prop}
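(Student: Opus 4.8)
The plan is to prove the lower bound in one line and then concentrate on the upper bound, which is where all the work sits. For the lower bound, note that any two vertices lying in distinct parts of $K_{n_1,\ldots,n_k}$ are adjacent, so every independent set is contained in a single part; hence $\alpha(K_{n_1,\ldots,n_k})=n_k$, and Theorem~\ref{thm:bounds}(i) gives $\D(K_{n_1,\ldots,n_k})\ge\alpha(K_{n_1,\ldots,n_k})=n_k$.

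For the upper bound I would fix an arbitrary orientation $D$ of $G=K_{n_1,\ldots,n_k}$ and exhibit a dominating set of $D$ of size at most $\max\{n_k,k\}$; maximising over orientations then finishes the proof. The dichotomy I would use is whether some part is already a dominating set of $D$. Since each $A_i$ is independent, $A_i$ dominates $D$ exactly when every vertex of $V(G)\setminus A_i$ has an in-neighbour in $A_i$. \emph{Case 1: this holds for some $i$.} Then $\gamma(D)\le|A_i|=n_i\le n_k\le\max\{n_k,k\}$. \emph{Case 2: for every $i\in[k]$ the part $A_i$ is not a dominating set of $D$.} Then for each $i$ there is a vertex $w_i\in V(G)\setminus A_i$ with no in-neighbour in $A_i$; since $w_i$ lies outside $A_i$ it is adjacent in $G$ to every vertex of $A_i$, so this says precisely $A_i\subseteq N^+_D(w_i)$. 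Put $S=\{w_1,\ldots,w_k\}$: every vertex $x$ of $G$ lies in some part $A_j$ and is then dominated by $w_j$, so $S$ is a dominating set of $D$ with $|S|\le k\le\max\{n_k,k\}$. In both cases $\gamma(D)\le\max\{n_k,k\}$, as desired. (Together with the lower bound this incidentally yields $\D(K_{n_1,\ldots,n_k})=n_k$ whenever $n_k\ge k$.)

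The step I expect to be the main obstacle is resisting the ``obvious'' approach rather than any hard computation. Inducting on $k$ by deleting the largest part $A_k$ almost works — if no vertex outside $A_k$ dominates all of $A_k$ then $A_k$ itself dominates $G$, and otherwise one adds such a vertex to a dominating set of the orientation induced on $K_{n_1,\ldots,n_{k-1}}$ — but this costs an extra $+1$ exactly when $n_{k-1}=n_k\ge k$, and peeling further does not recover it when several parts equal $n_k$ (already visible for $K_{n,n,n}$ with $n\ge3$). The remedy is to handle all parts simultaneously, as in Case~2: each ``bad'' part donates a single vertex, and those $k$ vertices dominate everything at once. The only genuinely delicate point in the write-up is the elementary equivalence ``$A_i$ fails to dominate $D$ $\iff$ some vertex outside $A_i$ has all of $A_i$ as out-neighbours'', which uses that $G$ is complete multipartite.
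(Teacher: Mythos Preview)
Your proof is correct and follows essentially the same approach as the paper: the lower bound via $\alpha(K_{n_1,\ldots,n_k})=n_k$ and Theorem~\ref{thm:bounds}(i), and the upper bound via the dichotomy ``some part $A_i$ dominates'' versus ``each part $A_i$ is entirely out-dominated by a single vertex $w_i$, so $\{w_1,\ldots,w_k\}$ dominates.'' Your extra commentary on why the naive induction fails is a nice pedagogical addition but not part of the paper's argument.
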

\begin{proof}
The lower bound follows from Theorem~\ref{thm:bounds}(i), since $n_k=\alpha(K_{n_1,\ldots, n_k})$. Set $X=K_{n_1,\ldots, n_k}$. For the proof of the upper bound, consider an orientation $X_f$ of $X$ such that $\gamma(X_f)=\D(X)$. If one of the sets $A_i$, where $i\in [k]$, is a dominating set of $X_f$, then $\D(X)\le n_i\le n_k$, as claimed. Otherwise, for each $i\in [k]$, there exists a vertex $x_j\in V(X) \setminus A_i$ such that $x_j$ dominates all vertices of $A_i$. Hence, the set $\{x_1,\ldots, x_k\}$ is a dominating set of $X_f$, giving $\D(X)\le k$, as claimed.
\end{proof}

The following result immediately follows from Proposition~\ref{prp:multipartite-general}, and resolves the orientable domination of a large class of complete multipartite graphs.

\begin{cor}
\label{cor:large-family-complete-multipartite}
If $n_1\le n_2\le\cdots\le n_k$ is a non-decreasing sequence  of positive integers, where $n_k\ge k\ge 2$, then $\D(K_{n_1,\ldots, n_k})=n_k=\alpha(K_{n_1,\ldots, n_k})$.
\end{cor}

Finally, we concentrate on complete tripartite graphs and extend the result on $\D(K_{n,n,n})$ from~\cite{cvy-1996}.

\begin{thm}
\label{thm:complete-tripartite}
If $1 \le n_1\le n_2\le n_3$, then
\begin{displaymath}
\D(K_{n_1,n_2,n_3}) = \left\{\begin{array}{ll}
n_3; & n_3\ge 3,\\
3; & n_1=n_2=n_3=2,\\
2; & \textrm{otherwise}.
\end{array} \right.
\end{displaymath}
\end{thm}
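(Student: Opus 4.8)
The plan is to verify the three cases of the formula, most of which follow readily from the general bounds already established. For the case $n_3 \ge 3$, the lower bound $\D(K_{n_1,n_2,n_3}) \ge n_3$ comes from Theorem~\ref{thm:bounds}(i) since $\alpha(K_{n_1,n_2,n_3}) = n_3$; the upper bound $\D(K_{n_1,n_2,n_3}) \le \max\{n_3, 3\} = n_3$ is exactly Proposition~\ref{prp:multipartite-general}. So this case is immediate, and indeed it is already covered by Corollary~\ref{cor:large-family-complete-multipartite} when $n_3 \ge 3$. The case $n_1 = n_2 = n_3 = 2$ asserts $\D(K_{2,2,2}) = 3$: since $K_{2,2,2} = K_3 \circ \overline{K_2}$, this is precisely the value computed by Chartrand et al.\ in~\cite{cvy-1996} (the base case $n=2$ of $\D(K_{n,n,n})$), so I would just cite that, or alternatively exhibit an explicit orientation of $K_{2,2,2}$ with domination number $3$ together with the observation that $\alpha(K_{2,2,2}) = 2$ forces a $\gamma$-set of size $2$ to be one of the parts $A_i$, which is easily ruled out for a suitable orientation.

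The remaining ``otherwise'' case is where the real work lies: here $n_3 \le 2$ and we are not in the all-$2$'s situation, so the possibilities are $(n_1,n_2,n_3) \in \{(1,1,1), (1,1,2), (1,2,2)\}$ (the case $(2,2,2)$ being excluded). In each of these $\alpha = n_3 \le 2$, so Theorem~\ref{thm:bounds}(i) gives only $\D \ge 2$, and we must prove the matching upper bound $\D \le 2$. For $(1,1,1)$ this is $\D(K_3) = 2$, which is stated in the text. For the other two, I would argue directly: take any orientation $X_f$ of $X = K_{n_1,n_2,n_3}$ and produce a dominating set of size $2$. Following the idea in the proof of Proposition~\ref{prp:multipartite-general}, if some part $A_i$ is a dominating set then $|A_i| = n_i \le 2$ and we are done; otherwise each $A_i$ is dominated by a single outside vertex $x_{(i)}$, and the point is that since there are only three small parts, the vertices $\{x_{(1)}, x_{(2)}, x_{(3)}\}$ need not be distinct — in fact in these small cases two of them can always be chosen to coincide, or one checks by hand that two vertices suffice. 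Concretely, for $(1,2,2)$ I would let $A_1 = \{a\}$; the vertex $a$ dominates or is dominated along each of its $4$ incident arcs, and a short case analysis on the orientation of the edges incident to $a$ and among $A_2 \cup A_3$ yields a dominating pair.

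The main obstacle, then, is this finite case check for $(1,1,2)$ and $(1,2,2)$: showing $\D \le 2$ requires ruling out every orientation having domination number $\ge 3$. I expect this to be a routine but slightly fiddly case analysis, and the cleanest route is probably to phrase it via Theorem~\ref{thm:bounds}(ii): in each of these small graphs one can check the matching number is large enough that $n(G) - \alpha'(G) \le 2$. For $(1,1,2)$, $n(G) = 4$ and $K_{1,1,2}$ contains a perfect matching, so $\D \le 4 - 2 = 2$. For $(1,2,2)$, $n(G) = 5$ and $\alpha'(K_{1,2,2}) = 2$ (it has a matching of size $2$ but not $3$, since... wait, $K_{1,2,2}$ on $5$ vertices does have a matching of size $2$ and in fact a near-perfect matching would need size... let me instead just note $\alpha'(K_{1,2,2}) \ge 2$), giving $\D \le 5 - 2 = 3$, which is not tight enough — so for $(1,2,2)$ the matching bound alone fails and one genuinely needs the direct orientation argument sketched above. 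I would therefore handle $(1,2,2)$ by the hands-on dominating-pair construction and reserve Theorem~\ref{thm:bounds}(ii) for $(1,1,2)$, with $(1,1,1)$ and $(2,2,2)$ cited from the preceding text.
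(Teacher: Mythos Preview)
Your plan is correct and mirrors the paper's proof: Corollary~\ref{cor:large-family-complete-multipartite} for $n_3\ge 3$, an explicit orientation (or citation to~\cite{cvy-1996}) for $K_{2,2,2}$, and direct checks for the remaining small cases --- the paper handles $K_{1,2,2}$ by exactly the ``if $A_2,A_3$ fail to dominate, pick $x_2,x_3$'' argument you sketch, and simply asserts $\D(K_4-e)=\D(K_3)=2$ for the others. One correction, though: your alternative justification for $\D(K_{2,2,2})\ge 3$, namely that ``$\alpha(K_{2,2,2})=2$ forces a $\gamma$-set of size $2$ to be one of the parts $A_i$,'' is false --- a dominating set need not be independent, so a putative $2$-element dominating set could just as well consist of two adjacent vertices from different parts. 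The paper avoids this by exhibiting a specific orientation and checking directly that no pair of vertices dominates it; your primary suggestion of citing~\cite{cvy-1996} is also fine.
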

\begin{proof}
If $n_3\ge 3$, then $\D(K_{n_1,n_2,n_3})=n_3$, by Corollary~\ref{cor:large-family-complete-multipartite}. Hence, let $n_3\le 2$, and consider the following cases. To see that $\D(K_{2,2,2})=3$, first note that $\D(K_{2,2,2})\le 3$, by Proposition~\ref{prp:multipartite-general}. The orientation $(K_{2,2,2})_f$ of $K_{2,2,2}$ depicted in the following table, where we list the closed out-neighborhoods of each of the vertices of $K_{2,2,2}$:

\smallskip

\begin{tabular}{|c|c|c|c|c|c|c|}
\hline
$u$ &  $x_1$ & $x_2$ & $x_3$ & $x_4$ & $x_5$ & $x_6$ \\
\hline
$N^+[u]$ & $x_1,x_5,x_6$ & $x_2,x_4,x_6$ & $x_1,x_2,x_3$& $x_1,x_4,x_5$ & $x_2,x_3,x_5$ & $x_3,x_4,x_6$ \\
\hline
\end{tabular}\,,

\medskip

\noindent shows that $\gamma((K_{2,2,2})_f)=3$, since no two vertices dominate the oriented graph.

By Lemma~\ref{lem:induced-spanning}(i), $\D(K_{1,2,2}) \ge 2$ since $K_3$ is an induced subgraph of $K_{1,2,2}$.  Let $f$ be any 
orienting mapping of $K_{1,2,2}$ such that $\D(K_{1,2,2})= \gamma((K_{1,2,2})_f)$. If either of $A_2$ or $A_3$ is a dominating set of $(K_{1,2,2})_f$, 
then $\gamma((K_{1,2,2})_f)= 2$. Therefore assume that neither of $A_2$ or $A_3$ is a dominating set of $(K_{1,2,2})_f$.  
Thus $A_2$, resp. $A_3$, is dominated by a vertex $x_2$, resp. $x_3$.  Note that $x_2 \neq x_3$, for otherwise $\gamma((K_{1,2,2})_f)=1$, which is 
a contradiction.  In such a situation, it is easy to see that $\{x_2,x_3\}$ is a dominating set of $(K_{1,2,2})_f=2$. Therefore, $\D(K_{1,2,2})=2$.

Note that $K_{1,1,2}$ is isomorphic to $K_4-e$ while $K_{1,1,1}$ is isomorphic to $K_3$, and clearly $\D(K_4-e)=2=\D(K_3)$, which concludes the proof.
\end{proof}


\subsection{Domination and packing in an acyclic orientation of $C_{2k+1}\circ \overline{K_s}$}

The classical result of Meir and Moon~\cite{mm-1975} states that the domination number of a tree $T$ is equal to the 2-packing number of $T$, which is defined as the maximum number of pairwise disjoint closed neighborhoods in $T$. In~\cite{bkr} this result was extended to the context of digraphs. (See also
Mojdeh, Samadi and G.~Yero~\cite[Theorem 5]{msy-2019} where the special case of this result was proved for orientations of trees.) The extension uses the following notion -- the digraph version of a 2-packing.
A subset $P$ of $V(D)$ is a \emph{packing} of a digraph $D$ if there are no arcs joining vertices of $P$ and for every two vertices $x,y\in P$ there does not exist $v\in V(D)$
such that $\{(v,x),(v,y)\} \subseteq A(D)$.  The \emph{packing number}, $\rho(D)$, of $D$ is the cardinality of a largest packing in $D$.

The mentioned extension to digraphs~\cite[Theorem 1.2]{bkr} asserts that  if $T$ is a digraph whose underlying graph is a tree, then $\rho(T) = \gamma(T)$.
The authors then asked whether the result can be extended to all acyclic digraphs; see~\cite[Problem 1]{bkr}. Recall that a digraph is {\em acyclic} if it contains no directed cycles. In particular, an acyclic digraph contains no opposite arcs, hence it is an orientation of an undirected graph. The mentioned problem reads as follows:
Is $\rho(D)=\gamma(D)$ if $D$ is an acyclic digraph?

We now use the lexicographic product to answer this question in the negative. The following construction will be used. Consider the lexicographic product $C_{2k+1}\circ \overline{K_s}$, where $k\ge 2$ and $s\ge 2$. Let $V(C_{2k+1})=\{v_1,\ldots, v_{2k+1}\}$, and let $V(\overline{K_s})=\{w_1,\ldots,w_s\}$. Let $X=C_{2k+1}\circ \overline{K_s}$, and define the orientation using the following mapping $f:E(X)\to V(X)\times V(X)$. For every $i\in [2k]$ and $j,k\in [s]$, let $$f((v_i,w_j)(v_{i+1},w_k))=((v_i,w_j),(v_{i+1},w_k)),$$
and for every $j,k\in [s]$, let $$f((v_1,w_j)(v_{2k+1},w_k))=((v_1,w_j),(v_{2k+1},w_k)).$$

\begin{thm}\label{thm:problem-negative}
If $k\ge 2$, $s\ge 2$, and $f$ is the orienting mapping as defined above, then $$\gamma((C_{2k+1}\circ \overline{K_s})_f)=s+2k-2 \quad{ and }\quad \rho((C_{2k+1}\circ \overline{K_s})_f)=s+k-1.$$
\end{thm}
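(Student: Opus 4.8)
The plan is to analyze the very concrete orientation $X_f$ of $X = C_{2k+1}\circ\overline{K_s}$ described just before the statement. Notice first that $X_f$ is acyclic: the arcs go from layer $i$ to layer $i+1$ for $i\in[2k]$, and the "wrap" arcs go from layer $1$ to layer $2k+1$; so the layers $v_2, v_3,\ldots, v_{2k+1}, v_1$ form a topological order (every arc goes forward in this list), hence there is no directed cycle. I would record this explicitly since the whole point is to produce an acyclic counterexample to \cite[Problem~1]{bkr}.

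\medskip
\noindent\textbf{Step 1: the domination number.} Write $B_i = \{(v_i,w_j) : j\in[s]\}$ for the $i$-th layer. The key structural facts are: (a) the only in-neighbors of a vertex in $B_{i+1}$ (indices mod the cycle, with the wrap as specified) lie in $B_i$; in fact a vertex of $B_i$ dominates \emph{all} of $B_{i+1}$, and nothing else outside $B_i$. (b) Layer $B_2$ can only be dominated from $B_1$, so any dominating set meets $B_1$; one vertex of $B_1$ suffices to dominate all of $B_2$. (c) Layer $B_1$ itself has in-neighbors only in $B_{2k}$, so a dominating set must either contain all of $B_1$ or contain a vertex of $B_{2k}$. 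For the upper bound I would exhibit a dominating set of size $s+2k-2$: take one vertex from each of the $2k-1$ layers $B_1, B_2,\ldots,B_{2k-1}$ — this dominates $B_2,\ldots,B_{2k}$ — plus we still must dominate the $s$ vertices of $B_1$. Since $B_1$'s in-neighbors are exactly $B_{2k}$, and we have already used up one "free" slot, one needs to add enough vertices to cover $B_1$; the cheapest way is to include all of $B_1$ rather than reach it from $B_{2k}$, after which the chain only needs representatives from $B_2,\dots,B_{2k-1}$. The honest accounting gives $s + (2k-2)$: all $s$ vertices of $B_1$, plus one vertex each from $B_2,\ldots,B_{2k-1}$. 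For the lower bound, let $D$ be a dominating set. For each $i\in\{2,\ldots,2k+1\}$, the vertices of $B_i$ not dominated from $B_{i-1}$ must be in $D$; and if $D\cap B_{i-1}=\emptyset$ then all of $B_i\subseteq D$. A short case analysis on whether $D$ contains a vertex in $B_{2k}$ (needed to reach $B_1$ cheaply) versus containing all of $B_1$ forces $|D|\ge s+2k-2$ — the odd length $2k+1$ is what blocks the "alternate every other layer" shortcut that would work on an even cycle.

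\medskip
\noindent\textbf{Step 2: the packing number.} A packing $P$ of a digraph here means: no arcs inside $P$, and no two vertices of $P$ share a common in-neighbor. Two vertices in the same layer $B_i$ have no arc between them, but they \emph{do} share common in-neighbors (every vertex of $B_{i-1}$), so $|P\cap B_i|\le 1$ for every $i$ — with the single exception that if $P\cap B_{i-1}=\emptyset$ then a whole layer $B_i$ could sit in $P$, but two consecutive layers cannot both be "hit", and moreover two vertices in consecutive layers $B_i, B_{i+1}$ are joined by an arc, so at most one of those is in $P$ anyway. Thus $P$ essentially picks at most one vertex per layer and cannot pick two consecutive layers; on a cycle of $2k+1$ layers this independent-set constraint allows $k$ layers. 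But we can do slightly better by exploiting the exception once: choose $P$ to consist of all of one layer, say $B_{2k+1}$ (legal provided $P\cap B_{2k}=\emptyset$ and $P\cap B_1=\emptyset$, noting $B_{2k+1}$'s out-neighbors are $B_1$), contributing $s$, and then alternate through the remaining path $B_2,B_3,\ldots,B_{2k-1}$ picking one vertex from every other layer, contributing $k-1$; total $s+k-1$. For the matching upper bound, fix $P$ and let $\ell$ be the number of layers $B_i$ with $|P\cap B_i|\ge 2$; by the common-in-neighbor condition each such layer must have empty predecessor layer, and by the arc condition such a layer contributes at most its full $s$ while forcing a gap, so a careful count (one "fat" layer costs $s$ but kills an adjacent layer) shows $|P|\le s + k - 1$, with the single fat layer being optimal.

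\medskip
\noindent\textbf{Main obstacle.} The routine part is the constructions achieving $s+2k-2$ and $s+k-1$; the delicate part is the two lower/upper matching bounds, and in particular pinning down exactly how the parity of the cycle ($2k+1$ odd) interacts with the "you may take a whole layer if its predecessor is empty" loophole in the packing definition — one must be sure that using this loophole more than once, or combining it with layer alternation around the odd cycle, never beats $s+k-1$, and symmetrically that no clever dominating set beats $s+2k-2$. I expect the packing upper bound $\rho(X_f)\le s+k-1$ to be the hardest step: it requires arguing that a second "fat" layer is never worth the gap it forces around an odd cycle. Once both quantities are computed, the conclusion is immediate: for $k\ge 2$ and $s\ge 2$ we have $s+2k-2 > s+k-1$, so $\gamma(X_f)>\rho(X_f)$, and since $X_f$ is acyclic this refutes \cite[Problem~1]{bkr}.
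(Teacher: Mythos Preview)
Your overall strategy matches the paper's, but you have misread the orientation in a way that propagates through both computations. The layer $B_1$ has \emph{no} in-neighbors: the path arcs go $B_1\to B_2\to\cdots\to B_{2k+1}$ and the wrap arcs also go $B_1\to B_{2k+1}$, so $B_1$ is the unique source block and $B_{2k+1}$ is the unique sink block (with in-neighbors in both $B_{2k}$ and $B_1$). Consequently your claim (c) is false ($B_1$ is not reachable from $B_{2k}$, or from anywhere), your topological order should be $v_1,v_2,\ldots,v_{2k+1}$ rather than the one you wrote, and $B_{2k+1}$ has no out-neighbors at all.

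Once this is corrected the arguments become shorter than you anticipate. For domination, every dominating set must contain all of $B_1$ (those vertices have in-degree $0$); then $B_2$ and $B_{2k+1}$ are automatically dominated, and each of the $2k-2$ layers $B_3,\ldots,B_{2k}$ forces a distinct further vertex into $D$, giving $|D|\ge s+(2k-2)$ with no case split on ``reaching $B_1$ cheaply''. For packing, the only layer whose vertices do \emph{not} share a common in-neighbor is $B_1$, so $|P\cap B_i|\le 1$ for every $i\ge 2$; your proposed ``fat'' layer $B_{2k+1}$ is therefore illegal (any two of its vertices share, e.g., the in-neighbor $(v_{2k},w_1)$), the fat layer must be $B_1$, and your worry about multiple fat layers evaporates. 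The paper's explicit maximum packing is $B_1\cup\{(v_3,w_1),(v_5,w_1),\ldots,(v_{2k-1},w_1)\}$.
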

\begin{proof}
Set $X=C_{2k+1}\circ \overline{K_s}$ for this proof, and for each $i\in[2k+1]$ let $V_i=\{(v_i,w_j):\, j\in [s]\}$.

Let $D$ be a minimum dominating set of $X_f$. Note that the in-degree of each vertex from $V_1$ is $0$, which implies that these vertices belong to every dominating set of $X_f$, thus also to $D$. Hence the vertices from $V_1\cup V_2\cup V_{2k+1}$ are dominated by the vertices from $V_1$, where $V_1\subseteq D$. Consider an arbitrary set $V_i$, where $i\in \{3,\ldots, 2k\}$. In order to dominate vertices of $V_i$ either all of them lie in $D$ or $V_{i-1}\cap D\ne\emptyset$. If $V_i$ is dominated by a vertex $(v_{i-1},w_j)$ (from $V_{i-1}\cap D$), then we set $x_i=(v_{i-1},w_j)$. Otherwise, if $D\cap V_i=V_i$, then set $x_i=(v_{i},w_1)$. In this case, further set $x_{i+1}=(v_i,w_2)$ (note that $x_{i+1}$ dominates $V_{i+1}$). The resulting mapping, which assigns $x_i$ to $V_i$ is one-to-one, which implies that $V_1\cup\{x_3,\ldots,x_{2k}\}\subseteq D$, hence $\gamma(X_f)\ge s+(2k+1)-3=s+2k-2$. On the other hand, $V_1\cup\{(v_2,w_1),(v_3,w_1),\ldots, (v_{2k-1},w_1)\}$ is a dominating set of cardinality $s+2k-2$, which gives the first formula.

To establish the packing number of $X_f$, first note that in every packing $P$ of $X_f$, we have $|P\cap V_i|\le 1$ holds for all $i\in \{2,3,\ldots, 2k+1\}$. Indeed, if $\{(v_i,w_j),(v_i,w_k)\}\subseteq P$, then the arcs going from $(v_{i-1},w_1)$ to each of the two vertices imply that $P$ is not a packing. Now, if $|V_i\cap P|=1$, then $|V_{i-1}\cap P|=0=|V_{i+1}\cap P|$. Hence, $|P\cap (V_2\cup\cdots \cup V_{2k})|\le k$. Moreover, if $|P\cap (V_2\cup\cdots \cup V_{2k})|= k$, then $V_2\cap P\ne\emptyset$ 
and $V_{2k}\cap P\ne\emptyset$, which implies that $V_1\cap P=\emptyset = V_{2k+1} \cap P$. This yields $|P|=k<s+k-1$. On the other hand, if 
$|P\cap (V_2\cup\cdots \cup V_{2k})|\le k-1$, we have $|P| \le s+k-1$ because $|P \cap (V_1 \cup V_{2k+1})| \le s$.  
However, the set $V_1\cup \{(v_3,w_1), (v_5,w_1), \ldots, (v_{2k-1},w_1)\}$ is a packing of cardinality $s+k-1$, and is thus a maximum packing of $X_f$. 
Thus, $\rho(X_f)=s+k-1$.
\end{proof}

\section*{Acknowledgments}
We are grateful to one of the reviewers for carefully reading the paper and for a number of suggestions by which we improved the final version of the paper. 
This work was performed within the bilateral grant ``Domination in graphs, digraphs and their products" (BI-US/22-24-038). The authors also thank Trinity College for its support.  B.B.\ and S.K.\ were supported by the Slovenian Research Agency (ARRS) under the grants P1-0297, J1-2452, N1-0285 and J1-3002.
\medskip


\begin{thebibliography}{99}

\bibitem{alon-2006}
N.~Alon, G.~Brightwell, H.A.~Kierstead, A.V.~Kostochka, P.~Winkler,
Dominating sets in $k$-majority tournaments,
J.\ Combin.\ Theory Ser.\ B 96 (2006) 374--387.

\bibitem{BDGHHKR2012} B.~Bre\v{s}ar, P.~Dorbec, W.~Goddard, B.L.~Hartnell, M.A.~Henning, S.~Klav\v{z}ar, D.F.~Rall,
Vizing's conjecture: a survey and recent results, J.\ Graph Theory 69 (2012) 46--76.

\bibitem{bkr} B.~Bre\v{s}ar, K.~Kuenzel, D.F.~Rall, Domination in digraphs and their direct and Cartesian products, J.\ Graph Theory 99 (2022) 359--377.


\bibitem{ch-2012} Y.~Caro, M.A.~Henning, Directed domination in oriented graphs, Discrete Appl.\ Math.\ 160 (2012) 1053--1063.


\bibitem{cvy-1996} G.~Chartrand, D.W.~VanderJagt, B.Q.~Yue, Orientable domination in graphs, Congr.\ Numer.\ 119 (1996) 51--63.


\bibitem{dong-2015}
Y.~Dong, E.~Shan, L.~Kang,
Constructing the minimum dominating sets of generalized de {B}ruijn digraphs,
Discrete Math.\ 338 (2015) 1501--1508.


\bibitem{erdos} P.~Erd\H{o}s, On a problem in graph theory, Math.\ Gaz.\ 47 (1963) 220--223.


\bibitem{HIK} R.~Hammack, W.~Imrich, S.~Klav\v{z}ar, \textit{Handbook of
Product Graphs, Second Edition}, CRC Press, Boca Raton, FL, 2011.

\bibitem{hao-2018}
G.~Hao, J.~Qian,
Bounds on the domination number of a digraph,
J.\ Comb.\ Optim.\ 35 (2018) 64--74.

\bibitem{hlnt-2018} A.~Harutyunyan, T.-N.~Le, A.~Newman, S.~Thomass\'{e}, Domination and fractional domination in digraphs, Electron.\ J.\ Combin.\ 25 (2018) Paper \# 3.32, 9 pp.


\bibitem{bookHHH} T.W.~Haynes, S.T.~Hedetniemi, M.A.~Henning, Domination in digraphs, in [Structures of domination in graphs,  Dev.\ Math., 66, Springer, Cham, 2021], 387--428.

\bibitem{lu-2000}
X.~Lu, D.-W.~Wang, C.K.~Wong,
On the bounded domination number of tournaments,
Discrete Math.\ 220 (2000) 257--261.

\bibitem{mm-1975} A.~Meir, J.~W.~Moon,  Relations between packing and covering numbers of a tree, Pacific J.\ Math.\ 61 (1975) 225--233.

\bibitem{msy-2019} D.A.~Mojdeh, B.~Samadi,  I.G.~Yero,
Packing and domination parameters in digraphs,
Discrete Appl.\ Math.\ 269 (2019) 184--192.

\bibitem{reid-2004}
K.B.~Reid, A.A.~McRae, S.M.~Hedetniemi, S.T.~Hedetniemi,
Domination and irredundance in tournaments,
Australas.\ J.\ Combin.\ 29 (2004) 157--172.

\bibitem{samodivkin-2021}
V.~Samodivkin,
Domination related parameters in the generalized lexicographic product of graphs,
Discrete Appl.\ Math.\ 300 (2021) 77--84.



\bibitem{szekeres-1965}
E.~Szekeres, G.~Szekeres,
On a problem of {S}ch\"{u}tte and {E}rd\H{o}s,
Math.\ Gaz.\ 49 (1965) 290--293.

\bibitem{v1968} V.G.~Vizing, Some unsolved problems in graph theory,  Uspehi Mat.\ Nauk 23 (1968) 117--134.



\end{thebibliography}
\end{document}